\newcommand{\ds}{\displaystyle}
\newcommand{\normal}{\trianglelefteq}
\newcommand{\F}{\mathbb{F}}
\newcommand{\Z}{\mathbb{Z}}
\newcommand{\Q}{\mathbb{Q}}
\newcommand{\C}{\mathbb{C}}
\newcommand{\Gal}{\textnormal{Gal}}
\newcommand{\Hom}{\textnormal{Hom}}
\newcommand{\Cl}{\textnormal{Cl}}
\newcommand{\im}{\textnormal{im }}
\newcommand{\Frob}{\textnormal{Frob}}
\newcommand{\rk}{\textnormal{rk}}
\newcommand{\Aut}{\textnormal{Aut}}
\newcommand{\Res}{\textnormal{Res}}
\newcommand{\disc}{\textnormal{disc}}
\newcommand{\Leg}[2]{\left(\frac{#1}{#2}\right)}
\newtheorem*{lemma*}{section}
\newtheorem{theorem}{Theorem}[section]
\newtheorem{lemma}[theorem]{Lemma}
\newtheorem{definition}{Definition}[section]
\newtheorem{corollary}[theorem]{Corollary}
\newtheorem{proposition}[theorem]{Proposition}
\theoremstyle{definition}
\newtheorem{example}[theorem]{Example}
\begin{document}

\title{Classifying Unramified Metabelian Extensions Using Lemmermeyer Factorizations}

\author{Brandon Alberts}

\maketitle

\abstract{We study solutions to the Brauer embedding problem with restricted ramification. Suppose $G$ and $A$ are a abelian groups, $E$ is a central extension of $G$ by $A$, and $f:\Gal(\overline{\Q}/\Q)\rightarrow G$ a continuous homomorphism. We determine conditions on the discriminant of $f$ that are equivalent to the existence of an unramified lift $\widetilde{f}$, i.e. a continuous homomorphism making the following diagram commute:
\[
\begin{tikzcd}
{} & & & \Gal(\overline{\Q}/\Q)\dlar[dashed,swap]{\widetilde{f}}\dar{f}\\
1\rar & A \rar & E \rar & G \rar & 1
\end{tikzcd}
\]
As a consequence of this result, we use conditions on the discriminant of $K$ for $K/\Q$ abelian to classify and count unramified nonabelian extensions $L/K$ normal over $\Q$ where the (nontrivial) commutator subgroup of $\Gal(L/\Q)$ is contained in its center. This generalizes a result due to Lemmermeyer, which states that a quadratic field $\Q(\sqrt{d})$ has an unramified extension normal over $\Q$ with Galois group $H_8$ the quaternion group if and only if the discriminant factors $d=d_1 d_2 d_3$ as a product of three coprime discriminants, at most one of which is negative, satisfying the following condition on Legendre symbols:
\[
\Leg{d_i d_j}{p_k}=1
\]
for $\{i,j,k\}=\{1,2,3\}$ and $p_i$ any prime dividing $d_i$.
}

\section{Introduction}

The question of existence of unramified abelian extensions of number fields is controlled by class field theory, with the result that the maximal unramified abelian extension of a number field $K$ has Galois group isomorphic to $Cl(K)$, the class group of $K$. Class groups are computable in individual instances and Cohen-Lenstra heuristics \cite{cohen-lenstra1} with related modifications \cite{gerth1} give conjectural evidence for the distribution of isomorphism classes of class groups over certain families of number fields. Yet overall there are still many open questions with regards to how often number fields have a particular class group or the relationships between class groups in families of number fields.

The most basic results for computing part of the class group in families of number fields come from genus theory. If $k$ is a quadratic field of discriminant $d$, then the $2$-rank $\rk_2 Cl(k) = \omega(d)-1$, where $\omega(d)=$ the number of prime divisors of $d$. This gives precisely the number of unramified quadratic extensions of a given quadratic number field. Similarly, unramified extensions of abelian number fields $L/k$ can be found using Kronecker-Weber whenever we require $L/\Q$ to be abelian.

One of the earliest results that go beyond Kronecker-Weber is due to Fueter \cite{fueter1}, R\'edei and Reichardt \cite{redei1}\cite{redei-reichardt1}\cite{reichardt1}. Their works prove the following classification:

\begin{theorem}\label{theorem:C4}
Let $k$ be a quadratic number field with discriminant $d$. Then there exists an unramifed $C_4$-extension $K/k$ if and only if there exists a factorization $d=d_1d_2$ into coprime discriminants such that
\[
\Leg{d_1}{p_2}=\Leg{d_2}{p_1}=1
\]
for all primes $p_j\mid d_j$. Moreover, there exist $2^{\omega(d)-2}$ unramified $C_4$-extensions $K/k$ satisfying $\Q(\sqrt{d_1},\sqrt{d_2})\subset K$ for each such nontrivial factorization.
\end{theorem}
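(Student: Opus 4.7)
The plan is to recast the existence of an unramified $C_4$-extension $K/k$ as an abelian problem via class field theory: such $K$ correspond to surjective homomorphisms $\Cl(k) \twoheadrightarrow C_4$. Composing with $C_4 \twoheadrightarrow C_2$ produces an unramified quadratic extension $k'/k$ inside $K$, and by genus theory every such $k'$ has the form $k' = \Q(\sqrt{d_1},\sqrt{d_2})$ for a factorization $d = d_1 d_2$ into coprime discriminants. So the theorem reduces to determining when a given unramified quadratic $k'/k$ lifts to an unramified cyclic quartic, and the Legendre conditions should pin down exactly when this is possible.

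For the forward direction, assume $K/k$ is an unramified $C_4$-extension containing $k'$, and write $K = k'(\sqrt{\alpha})$ with $\alpha \in k'^\times$. The requirement that $K/k$ be cyclic of order $4$ (rather than biquadratic) forces $N_{k'/k}(\alpha) \in k^{\times 2}$, and after rescaling by $\Q$-rationals one may arrange $N_{k'/\Q(\sqrt{d_2})}(\alpha) = d_1 r^2$ for some $r \in \Q^\times$. The unramifiedness of $K/k$ at each prime $p_2 \mid d_2$ then translates, via a local computation with the resulting explicit shape of $\alpha$, into $d_1$ being a square modulo $p_2$; by the symmetry $d_1\leftrightarrow d_2$ one also gets $d_2$ a square modulo each $p_1\mid d_1$. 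These are exactly the stated Legendre symbol conditions.

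Conversely, the Legendre conditions say that $d_1$ is everywhere locally a norm from $\Q(\sqrt{d_2})/\Q$, so by the Hasse norm theorem for cyclic extensions there exists $\alpha \in \Q(\sqrt{d_2})^\times$ with $N(\alpha) = d_1 r^2$ for some $r \in \Q^\times$. One checks that $K = k'(\sqrt{\alpha})$ is cyclic of order $4$ over $k$ (using the norm relation to control the action of $\Gal(k'/k)$ on $\sqrt{\alpha}$), and that $K/k$ is unramified at every finite prime. This last verification is a local calculation, handled separately at primes dividing $d_1$, primes dividing $d_2$, and all others, using the coprimality of $d_1$ and $d_2$ together with the fact that the norm relation forces the $p$-adic valuation of $\alpha$ to match that of $d_1 r^2$ in the appropriate way.

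The main obstacle is the counting: showing each valid factorization contributes exactly $2^{\omega(d)-2}$ distinct extensions. This comes from analyzing the ambiguity in choosing $\alpha$. Two valid choices $\alpha, \alpha'$ give the same $K$ precisely when $\alpha/\alpha' \in k'^{\times 2}$, and valid $\alpha$'s form a coset of a subgroup of a $2$-Selmer-type group of $\Q(\sqrt{d_2})$; identifying this coset with a subgroup of $(\Z/2)^{\omega(d)}$ cut out by the global reciprocity law pinpoints its size as $2^{\omega(d)-2}$. The bookkeeping needed to rule out distinct cosets yielding isomorphic $K$'s, and to correctly account for units and sign conventions, is the most delicate step.
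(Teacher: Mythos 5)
First, a point of orientation: the paper does not prove Theorem \ref{theorem:C4} at all --- it is quoted as a classical result of Fueter, R\'edei and Reichardt. Within the paper's framework it would instead be recovered from Theorem \ref{thm:main} (or Corollaries \ref{corollary:existence} and \ref{corollary:count}) applied to the central admissible pair $(C_4,D_4)$: one works with $(D_4)^{ab}\cong C_2\times C_2$, finds that $Y_{D_4}$ consists of the identity and the two classes whose preimage in $D_4$ is Klein four (giving the factorization $d=d_1d_2$), and the Legendre conditions appear as the vanishing of the commutator pairing $[f(\Frob_p),f(\tau_p)]_{D_4}$, with everything read off from Kronecker--Weber rather than from explicit generators. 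Your route --- radical generators $K=k'(\sqrt{\alpha})$, the Hasse norm theorem, and local ramification computations --- is essentially the original R\'edei--Reichardt argument. It is genuinely different and more self-contained, but it is exactly the kind of argument the paper's cohomological machinery is designed to replace, since it does not extend to the nonabelian targets treated later.

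Two concrete problems with your sketch. First, the cyclicity criterion is backwards: writing $k'=k(\sqrt{d_1})$ and $\alpha\in k'^{\times}$, the extension $k'(\sqrt{\alpha})/k$ is cyclic of degree $4$ precisely when $N_{k'/k}(\alpha)\in d_1\,k^{\times 2}$; if $N_{k'/k}(\alpha)\in k^{\times 2}$, say $\sigma(\alpha)=c^2/\alpha$ with $c\in k^\times$, then the lift of $\sigma$ sends $\sqrt{\alpha}\mapsto \pm c/\sqrt{\alpha}$ and squares to the identity, so $\Gal(k'(\sqrt{\alpha})/k)\cong C_2\times C_2$. Your later normalization $N_{k'/\Q(\sqrt{d_2})}(\alpha)=d_1r^2$ is nearer the truth but is a condition relative to a different quadratic subfield of $k'$ than the one that controls $\Gal(K/k)$, and the two statements are not consistent as written; since the extraction of the Legendre symbols is a local computation with this explicit $\alpha$, the error propagates. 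Second, the count $2^{\omega(d)-2}$ is not actually proved: you describe a Selmer-coset strategy and defer ``the most delicate step.'' A complete and much shorter argument (and the one matching Corollary \ref{corollary:count}) is: once one unramified $C_4$-extension containing $k'$ exists, the surjections $\Cl(k)\twoheadrightarrow C_4$ lifting the fixed surjection $\Cl(k)\twoheadrightarrow\Gal(k'/k)$ form a torsor under $\Hom(\Cl(k),\Z/2\Z)$, which has order $2^{\rk_2\Cl(k)}=2^{\omega(d)-1}$ by genus theory, and the nontrivial automorphism of $C_4$ pairs these lifts off without fixed points, leaving $2^{\omega(d)-2}$ distinct fields. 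As written, your proposal is a plausible outline of the classical proof rather than a proof.
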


There are other results of a similar flavor to this one classifying unramified extensions of families of number fields. For instance, there are results for the $\ell$- and $\ell^2$- torsion of the class group over cyclic degree $\ell$ fields \cite{gras1}\cite{inaba1}\cite{martinet1}\cite{nakagoshi1}\cite{odai1} and some results for nonabelian $p$-extensions of quadratic fields or cyclic fields of prime degree $q\ne p$ due to Nomura \cite{nomura1}\cite{nomura2}\cite{nomura3}\cite{nomura4}. Closest to the direction of this paper are results due to Lemmermeyer classifying unramified $G$-extensions of quadratic fields for $G$ one of several small nonabelian 2-groups \cite{lemmermeyer1}\cite{lemmermeyer2}, for example:

\begin{theorem}\label{theorem:H8}
Let $k$ be a quadratic number field of discriminant $d$ and $H_8$ the quaternion group of order $8$. Then there exists an unramified $H_8$-extension $K/k$ normal over $\Q$ if and only if there exists a factorization $d=d_1d_2d_3$ into three coprime discriminants, at most one of which is negative, such that
\[
\Leg{d_1d_2}{p_3}=\Leg{d_1d_3}{p_2}=\Leg{d_2d_3}{p_1}=1
\]
for all primes $p_j\mid d_j$. Moreover, there exist $2^{\omega(d)-3}$ unramified $H_8$-extensions $K/k$ normal over $\Q$ satisfying $\Q(\sqrt{d_1},\sqrt{d_2},\sqrt{d_3})\subset K$ for each such nontrivial factorization.
\end{theorem}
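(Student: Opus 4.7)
I would deduce Theorem~\ref{theorem:H8} as a direct application of the paper's main theorem on unramified solutions to the Brauer embedding problem, applied to the central extension
\[
1 \to C_2 \to H_8 \to C_2 \times C_2 \to 1,
\]
in which the kernel $C_2 = Z(H_8) = [H_8, H_8]$ and the quotient $C_2 \times C_2 = H_8^{\mathrm{ab}}$. An unramified extension of $k = \Q(\sqrt{d})$ whose normal closure over $\Q$ has Galois group $H_8$ corresponds, via standard Galois theory, to an unramified lift $\widetilde{f}: \Gal(\overline{\Q}/\Q) \to H_8$ of some continuous surjection $f: \Gal(\overline{\Q}/\Q) \to C_2 \times C_2$ whose fixed field, the maximal abelian subextension, is a biquadratic field $F \supset k$.

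First I would parameterize the biquadratics $F \supset k$. Each corresponds to an unordered factorization $d = d_1 d_2 d_3$ into coprime fundamental discriminants together with a distinguished index, via $F = \Q(\sqrt{d_i}, \sqrt{d_j d_k})$ with $\{j,k\} = \{1,2,3\} \setminus \{i\}$. Each such factorization therefore gives rise to three biquadratic fields, all sitting inside the common triquadratic $\Q(\sqrt{d_1}, \sqrt{d_2}, \sqrt{d_3})$.

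Next I would feed each such $f$ into the paper's main theorem. Its output is a discriminant condition on $f$ equivalent to the existence of an unramified lift to $H_8$. This condition is governed by the pullback under $f$ of the cohomology class in $H^2(C_2 \times C_2, C_2)$ classifying $H_8$; unwinding the resulting Hilbert symbol obstructions place by place should produce precisely the stated Legendre symbol condition $\Leg{d_id_j}{p_k}=1$ together with the archimedean condition that at most one of $d_1, d_2, d_3$ is negative. Crucially, the resulting criterion is symmetric in $\{d_1, d_2, d_3\}$, so it is independent of which biquadratic $F$ inside the triquadratic is selected; when it holds, all three choices lift simultaneously, and their lifts glue through the triquadratic. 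The main theorem should further describe the set of unramified lifts (when nonempty) as a torsor under an explicit group of twist characters ramified only where permitted; counting these, modulo the kernel of the central extension and the requisite local and archimedean constraints, gives the claimed count $2^{\omega(d)-3}$ per factorization.

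The main obstacle will be translating the abstract discriminant condition supplied by the main theorem into the explicit symmetric Legendre symbol condition in the statement. This requires identifying which class in $H^2(C_2 \times C_2, C_2)$ yields $H_8$ (rather than $D_4$), pulling it back under $f$, and computing its image in $\Br(\Q)[2]$ at each ramified prime and at infinity. The sign condition ``at most one $d_i$ negative'' emerges in particular from the local obstruction at the archimedean place, reflecting the fact that $H_8$ admits a faithful complex representation but no faithful real one; distinguishing this from the $D_4$ case, where the archimedean obstruction behaves differently, is the most delicate point of the cohomological bookkeeping.
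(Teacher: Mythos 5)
There is a genuine gap: you have set up the wrong embedding problem. The theorem concerns $K/k$ with $\Gal(K/k)\cong H_8$ and $K/\Q$ Galois, so $\Gal(K/\Q)$ is a group $G'$ of order $16$ containing $H_8$ as a normal subgroup of index $2$; in the language of Section~\ref{meta} this is the unique admissible central pair $(H_8,G')$ with $[G':H_8]=2$, and the central extension to which Theorem~\ref{thm:main} must be applied is $1\to[G',G']\to G'\to (G')^{ab}\to 1$ with $[G',G']\cong C_2$ and $(G')^{ab}\cong C_2^3$. It is this rank-three quotient $(G')^{ab}$ that produces the three-fold factorization $d=d_1d_2d_3$ and the symmetric Legendre conditions, via Corollaries~\ref{corollary:existence} and~\ref{corollary:count}. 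You instead apply the machinery to $1\to C_2\to H_8\to C_2\times C_2\to 1$, which would classify octic fields $L/\Q$ with $\Gal(L/\Q)\cong H_8$ unramified over their biquadratic subfield --- a different object: such an $L$ is only a $C_4$-extension of each quadratic subfield $k$, not an $H_8$-extension of $k$.

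Moreover, the embedding problem you chose has no unramified solutions at all, so the approach cannot be repaired by more careful bookkeeping. Every subgroup of order $4$ in $H_8$ is cyclic ($\langle i\rangle,\langle j\rangle,\langle k\rangle\cong C_4$), so for every nontrivial $y\in C_2\times C_2$ the preimage of $\langle y\rangle$ in $H_8$ is a nonsplit extension of $C_2$ by $C_2$, i.e.\ $\Res^{C_2\times C_2}_{\langle y\rangle}([H_8])\ne 0$. In the notation of Theorem~\ref{thm:main} this says $Y_{H_8}$ is trivial, so no ramified surjection $f:G_\Q\to C_2\times C_2$ admits a lift to $H_8$ unramified over $f$; equivalently, an $H_8$-field over $\Q$ is always ramified over its biquadratic subfield. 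Your archimedean heuristic about faithful real representations of $H_8$ is likewise attached to this wrong extension. Note finally that the paper does not reprove Theorem~\ref{theorem:H8} directly: it is quoted from Lemmermeyer and then recovered as the special case $(G,G')=(H_8,G')$ of Corollaries~\ref{corollary:existence} and~\ref{corollary:count}, where for instance the count $2^{\omega(d)-3}$ falls out of the formula $\prod_{y}(\#[G',G'][|y|])^{\omega(d_y)}/(\#\Aut([G'])\,\#\Hom(G',[G',G']))=2^{\omega(d)}/(1\cdot 2^{3})$.
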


All of these results follow the same theme: existence of an unramified extension corresponds to a vanishing condition on Dirichlet characters. As a consequence these results have been used to study the arithmetic statistics of unramified extensions in \cite{alberts1},\cite{alberts-klys1},\cite{fouvry-kluners1}, and \cite{klys1}.

The purpose of this paper will be to generalize these results using class field theory and the theory of embedding problems. This approach follows the same basic idea as Nomura \cite{nomura1}\cite{nomura2}\cite{nomura3}\cite{nomura4}, by using local conditions to say something about the solution to an embedding problem.

Suppose we have abelian groups $G$ and $A$, and $E$ a central extension of $G$ by $A$ whose $2$-coclass in $H^2(G,A)$ is represented by $[E]$. The Brauer embedding problem for a continuous homomorphism $f:G_\Q:=\Gal(\overline{Q}/\Q)\rightarrow G$ and an extension $E$ of $G$ is the question of whether or not there exits a continuous homomorphism $\widetilde{f}:G_\Q\rightarrow E$ such that the following diagram commutes:

\[
\begin{tikzcd}
{} & G_\Q\dar{f}\dlar[swap]{\widetilde{f}}\\
E \rar & G \rar &1
\end{tikzcd}
\]

We know that the a solution to the Brauer embedding problem for central extensions exists if and only if the solutions to the corresponding local embedding problems exist \cite{serre1}. Our main theorem will give explicit conditions for when we can find an unramified (at finite places) solution ot the Brauer embedding problem, namely solutions where $\ker f\cap I_p\le \ker \widetilde{f}$ with $I_p\le G_\Q$ the inertia group at $p$ defined up to conjugation. This will provide the necessary framework to classify certain unramified metabelian extensions of abelian number fields in a way directly generalizing the classification due to Lemmermeyer \cite{lemmermeyer1}\cite{lemmermeyer2}.

Before stating the main result, we introduce the following notation: for a prime $p>2$ fix a representative $\tau_p$ for the generator of inertia $I_p\le G_{\Q}^{ab}$ (for $p=2$ we choose generators $\tau_{2,0}$ and $\tau_{2,1}$ such that $[\tau_{2,0},\tau_{2,1}]=1$ and $(\tau_{2,0}\tau_{2,1})^2=1$). We also have a bilinear map $[,]_E:G\times G\rightarrow A$ given by the comutator of the lift to $E\times E$ and use $\exp(A)$ to denote the exponent of the group $A$, i.e. the smallest $n\in \Z_{>0}$ such that $nA=0$. Let $\Res^{G}_{H}:H^2(G,A)\rightarrow H^2(H,A)$ denote the restriction map on cohomology. Then the main result is as follows:

\begin{restatable}{theorem}{main}
\label{thm:main}
Given $f:G_\Q\rightarrow G$ let $Y_E=\{y\in G: \Res^G_{f(I_p)}([E])=0\}$ and $\disc(f)=\prod p^{e_p}$. Define $d_y = \prod_{f(\tau_p)=y} p^{e_p/[f(G_\Q):\langle y\rangle]}$. Then there exists an unramified (everywhere except possibly at the infinite place) solution to the Brauer embedding problem for $[E]$ if and only if
\begin{enumerate}
\item{
$\disc(f) = \prod_{y\in Y_E} d_y^{[f(G_\Q):\langle y\rangle]}$
}
\item{
For every prime $p\mid d$
\begin{align*}
\sum_{y\in Y_E}\Leg{p}{d_y}_{\exp(A)}[y,f(\tau_p)]_E= 0
\end{align*}
where $\Leg{p}{d_y}_{\exp(A)}\in\Z/\exp(A)\Z$.
}
\end{enumerate}
\end{restatable}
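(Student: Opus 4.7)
My plan is to split the argument into necessity and sufficiency, both handled by viewing the set of lifts of $f$ as a torsor under $\Hom(G_\Q,A)$.

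For necessity, assume an unramified lift $\widetilde{f}$ exists. The defining constraint $\ker f\cap I_p\subseteq\ker\widetilde{f}$ forces $\widetilde{f}|_{I_p}$ to factor through $I_p/(I_p\cap\ker f)\cong\langle f(\tau_p)\rangle$, producing a section of $E\to G$ over $\langle f(\tau_p)\rangle$, hence $f(\tau_p)\in Y_E$ at every ramified prime. Combined with the tame conductor-discriminant formula, in which the local exponent at a prime with $f(\tau_p)=y$ is $[f(G_\Q):\langle y\rangle]$ times an invariant depending only on $\langle y\rangle$, this rephrases precisely as condition (1).

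For sufficiency, I would first invoke Serre's local-global principle for central extensions with abelian kernel, which under (1) produces a (possibly ramified) global lift $\widetilde{f}_0$, since local unramified lifts exist at every prime. Every other lift has the form $\widetilde{f}_0\cdot\chi$ for some $\chi\in\Hom(G_\Q,A)$, so an unramified lift exists iff I can find such $\chi$ satisfying $\chi|_{\ker f\cap I_p}=-\widetilde{f}_0|_{\ker f\cap I_p}$ at every finite prime $p$. By Grunwald-Wang and global class field theory, such a $\chi$ is realized as a global character iff a reciprocity obstruction at each prime vanishes.

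The central computation, and the main obstacle, is identifying this reciprocity obstruction with the expression in condition (2). I would decompose $\chi$ according to the conductor stratification indexed by $y\in Y_E$: the resulting $A$-valued Dirichlet characters have conductors dividing each $d_y$, and their values at $p\nmid d_y$ are the higher power residue symbols $\Leg{p}{d_y}_{\exp(A)}$. The bilinear commutator $[,]_E:G\times G\to A$ is what promotes each such scalar to an element of $A$: inside $E$, the discrepancy between $\widetilde{f}_0(\tau_p)$ and its twisted version introduces the commutators $[y,f(\tau_p)]_E$, and the product formula at $p$ collapses to the sum in (2). Particular care is required at $p=2$, where the two generators $\tau_{2,0},\tau_{2,1}$ reflect the non-procyclic tame structure and accommodate the Grunwald-Wang subtlety for characters into groups with nontrivial $2$-torsion; once the explicit duality is in place, bilinearity of $[,]_E$ packages the result cleanly.
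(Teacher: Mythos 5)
Your overall architecture --- produce a possibly ramified lift via the local--global principle for central embedding problems, then twist by a character $\chi\in\Hom(G_\Q,A)$ to remove ramification --- is essentially the paper's strategy (the paper implements the twist as a Baer-sum $\pi_\Delta\circ(\widetilde{f}\times\rho)$ rather than a torsor argument, which is cosmetic). The genuine gap is precisely at the step you flag as the central computation. Over $\Q$, Kronecker--Weber gives $G_\Q^{ab}=\prod_q I_q^{ab}$, so a family of characters on the inertia groups, almost all trivial, always glues to a global character: there is no Grunwald--Wang or reciprocity obstruction to realizing $\chi$ globally, and appealing to one points the argument in the wrong direction. The true obstruction to solving $\chi|_{\ker f\cap I_p}=-\widetilde{f}_0|_{\ker f\cap I_p}$ is local and comes from the tame relation $\Frob_p\,\tau_p\,\Frob_p^{-1}=\tau_p^{\,p}$: the element $\tau_p^{\,p-1}$ lies in $\ker f\cap I_p$ but dies in $G_\Q^{ab}$, so $\chi$ necessarily kills it, whereas $\widetilde{f}_0(\tau_p)^{p-1}=[f(\Frob_p),f(\tau_p)]_E$ since the commutator is central. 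Hence no admissible $\chi$ exists unless $[f(\Frob_p),f(\tau_p)]_E=0$; conversely, when this vanishes the required local character does extend because $f(\tau_p)\in Y_E$ forces $\widetilde{f}_0(\tau_p)^{|f(\tau_p)|}\in |f(\tau_p)|\cdot A$. Your proposal never isolates this mechanism; the sentence ``the discrepancy between $\widetilde{f}_0(\tau_p)$ and its twisted version introduces the commutators'' is the assertion to be proved, not a proof.

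Even granting the reduction to $[f(\Frob_p),f(I_p)]_E=0$ for all $p$ (which is where the paper's Section 2 lands), passing to condition (2) requires computing $f(\Frob_p)$ itself in Kronecker--Weber coordinates: $\Frob_p$ has component the class of $p$ in each $I_q^{ab}\cong\Z_q^\times$, whence $f(\Frob_p)=\sum_{y\in Y_E}\Leg{p}{d_y}_{\exp(A)}\,y$, and bilinearity of $[\,,\,]_E$ yields the displayed sum (the paper's Lemma 3.3). You instead attribute the residue symbols to values of the twisting characters $\chi$, which is not where they arise and leaves the identification unestablished. Two smaller issues: condition (1) alone gives local solvability of the embedding problem, not local \emph{unramified} solvability (the latter already needs the commutator condition), though only the former is needed to invoke the local--global principle; and your sketch ignores the sign, prime-$2$, and infinite-place bookkeeping that occupies most of the paper's Lemma \ref{lemma:factorization} in establishing the equivalence of condition (1) with $\Res^G_{f(I_p)}([E])=0$.
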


Here $\Leg{\cdot}{d_y}_{\exp(A)}$ is a Dirichlet character, which we will describe in greater detail in the body of the paper.

The paper proceeds as follows: In section \ref{embed} we begin with a review of the Brauer embedding problem with restricted ramification, which will then be converted into group theoretic conditions. In section \ref{factor}, we present the proof of the main theorem. Section \ref{meta} describes how our result can be used to classify unramified metabelian extensions $L/K$ of abelian number fields whose Galois group over $\Q$ has its commutator subgroup contained in its center, generalizing the results of Fuerter, R\'edei, Reichardt, and Lemmermeyer. In section \ref{HeisenbergGroups}, we walk through an example classifying all unramified $H(\ell^3)$-extensions of cyclic degree $\ell$ fields tamely ramified at exactly three primes, where $\ell$ is an odd prime and $H(\ell^3)$ is the nonabelian group of order $\ell^3$ and exponent $\ell$ (called the Heisenberg group).

\section{The Unramified Brauer Embedding Problem}\label{embed}

Let $G_\Q=\Gal(\overline{\Q}/\Q)$ denote the absolute Galois group of $\Q$ with $D_p$ and $I_p$ the corresponding decomposition and inertia groups at the prime $p$ (defined as subgroups of $G_\Q$ up to conjugacy for $p$ finite or infinite). Given an extension $E$ of a group $G$ by an abelian group $A$, suppose we have a commutative diagram as follows:
\[
\begin{tikzcd}
{}&{}&{}&G_\Q\dar{f}\dlar[dashed][swap]{\widetilde{f}}\\
1\rar & A\rar & E\rar{\pi} & G\rar & 1
\end{tikzcd}
\]
Then the Brauer embedding problem asks when $f$ lifts to a continuous homomorphism $\widetilde{f}:G_\Q\rightarrow E$ that makes the diagram commute. This problem is ``solved" in a certain sense in the case of $A$ abelian and $E$ a central extension, in that a solution exists if and only if a solution exists to the corresponding local embedding problems:
\[
\begin{tikzcd}
{}&{}&{}&D_p\dar{f_p}\dlar[dashed][swap]{\widetilde{f}_p}\\
1\rar & A\rar & E\rar{\pi} & G\rar & 1
\end{tikzcd}
\]
where $f_p=f|_{D_p}$ \cite{malle-matzat1}\cite{serre1}.

We call such a lift $\widetilde{f}$ unramified over $f$ if $\ker f\cap I_p \le \ker \widetilde{f}$. Let $K=\overline{\Q}^{\ker f}$, $L=\overline{\Q}^{\ker\widetilde{f}}$, and $K_p$, $L_p$ the completions at a prime above $p$. Then $\widetilde{f}/f$ is unramified if and only if $L/K$ is unramified, i.e. if and only if $\widetilde{f}$ factors through $\Gal(K^{ur,ab}/K)$ where $K^{ur,ab}$ is the maximal unramified abelian extension of $K$, otherwise known as its Hilbert class field.

The solutions to the Brauer embedding problem are a priori independent of ramification. With a little extra work, we can determine when a solution can be found with specific ramification. Some results of this nature in more specific instances were proven in \cite{nomura1}\cite{nomura2}\cite{nomura3} by Nomura and some general results can be found in \cite{serre1}.

\begin{theorem}
Given $f:G_\Q\rightarrow G$ a continuous homomorphism with $f(D_p)$ abelian, there exists a solution to the Brauer embedding problem $\widetilde{f}:G_\Q\rightarrow E$ unramified at finite places if and only if $\Res^G_{f(D_p)}([E])$ is abelian and $\Res^G_{f(I_p)}([E])=0$ for all primes (finite and infinite) $p$.
\end{theorem}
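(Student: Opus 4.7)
The plan is to prove both directions by localizing at each prime: showing that an unramified global solution forces the two cohomological conditions place-by-place, and conversely constructing local unramified lifts from the conditions and patching them via a local-global principle for the Brauer embedding problem. For the forward direction, suppose $\widetilde{f}: G_\Q \to E$ is an unramified solution and let $K = \overline{\Q}^{\ker f}$ and $L = \overline{\Q}^{\ker \widetilde{f}}$. The unramified hypothesis forces $\widetilde{f}|_{I_p}$ to factor through $I_p/(\ker f \cap I_p) \cong f(I_p)$, producing a continuous section of the central extension $\pi^{-1}(f(I_p)) \twoheadrightarrow f(I_p)$; the existence of such a section is exactly $\Res^G_{f(I_p)}([E]) = 0$. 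For the decomposition condition, the local extension $L_p/K_p$ is unramified, so $L_p \subseteq K_p \cdot \Q_p^{\mathrm{ur}}$. Since $K_p \subseteq \Q_p^{\mathrm{ab}}$ (because $\Gal(K_p/\Q_p) \cong f(D_p)$ is abelian) and $\Q_p^{\mathrm{ur}} \subseteq \Q_p^{\mathrm{ab}}$, the compositum lies in $\Q_p^{\mathrm{ab}}$, so $\widetilde{f}(D_p) = \Gal(L_p/\Q_p)$ is abelian. The centrality of $A$ in $E$ then forces $\pi^{-1}(f(D_p)) = \widetilde{f}(D_p) \cdot A$ to be abelian, which is the intended meaning of ``$\Res^G_{f(D_p)}([E])$ is abelian.''

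For the reverse direction, I would first build a local unramified lift $\widetilde{f}_p: D_p \to E$ of $f|_{D_p}$ at each prime. Condition (a) ensures $\pi^{-1}(f(D_p))$ is abelian, so any such lift factors through the abelianization $D_p^{\mathrm{ab}}$, which by local class field theory splits as $\hat{\Z} \times I_p^{\mathrm{ab}}$ upon fixing a Frobenius lift. Define $\widetilde{f}_p$ on the inertia factor as the composite $I_p^{\mathrm{ab}} \twoheadrightarrow f(I_p) \xrightarrow{s_p} E$, where $s_p$ is the splitting supplied by condition (b), and on the Frobenius factor by sending $\Frob_p$ to any chosen preimage of $f(\Frob_p)$ in $\pi^{-1}(f(D_p))$. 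Abelianness of $\pi^{-1}(f(D_p))$ makes this a well-defined homomorphism, and it is unramified by construction; the infinite place is handled identically, noting $I_\infty$ has order at most two.

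To assemble a global unramified solution, I would pass to the quotient $G_\Q/\langle\langle \ker f \cap I_p : p \text{ prime}\rangle\rangle$ through which every unramified lift factors. The image of each $D_p$ in this quotient is precisely the source of the local embedding problem solved above, so applying Serre's local-global theorem to the induced homomorphism into $G$ produces a global lift that is automatically unramified. The principal technical obstacle I expect is justifying this local-global principle in the restricted-ramification setting: one must check that the obstruction class for the embedding problem over the restricted quotient vanishes whenever it vanishes at every decomposition subgroup. This can be deduced either by comparing with the inflation of the obstruction class to $H^2(G_\Q, A)$ and invoking the usual Hasse principle for central extensions, or by a direct Poitou-Tate argument in restricted-ramification global Galois cohomology.
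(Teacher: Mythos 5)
Your forward direction matches the paper's argument and is sound. The gap is in the converse, and it sits exactly at the step you flag as a ``technical obstacle'': the gluing. Serre's local-global theorem applies to embedding problems for $G_\Q$ itself, whereas you need to solve the embedding problem for the quotient $\Gamma = G_\Q/N$, $N=\langle\langle \ker f\cap I_p\rangle\rangle$, whose obstruction lives in $H^2(\Gamma,A)$. Your first proposed remedy --- inflate to $H^2(G_\Q,A)$ and invoke the usual Hasse principle --- only shows that the \emph{inflation} of the obstruction vanishes. But inflation $H^2(\Gamma,A)\to H^2(G_\Q,A)$ is not injective: by inflation-restriction its kernel is the image of the transgression from $\Hom(N,A)^{\Gamma}$ modulo the restrictions of global characters, which is precisely the group measuring whether a ramified lift can be corrected to an unramified one. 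So this remedy begs the question. Your second remedy (Poitou--Tate in restricted-ramification cohomology) is not available off the shelf either: $\Gamma$ is not a standard $G_{\Q,S}$, and the usual duality sequence requires the ramification set to contain the primes dividing $|A|$ and the archimedean place, neither of which is guaranteed here. As written, the converse reduces the theorem to an unproved local-global principle that is essentially equivalent to the theorem itself.

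The paper closes this gap by a different and more elementary route, and that route is the real content of the converse. It first takes an arbitrary, possibly ramified, lift $\widetilde{f}$ supplied by the standard local-global principle over $G_\Q$; since $\Res^G_{f(D_p)}([E])$ is abelian, the excess ramification of $\widetilde{f}$ over $f$ at each $p$ is recorded by a character $\rho_p:I_p^{ab}\to C_{n_p}\hookrightarrow A$ with $(n_p,|f(I_p)|)=1$. The key global input is Kronecker--Weber in the form $G_\Q^{ab}\cong\prod_p I_p^{ab}$, which glues the local characters $(\rho_p)$ into a single global character $\rho:G_\Q\to A$; twisting, i.e.\ passing to $x\mapsto\widetilde{f}(x)\rho(x)^{-1}$ (formally, composing $\widetilde{f}\times\rho$ with the quotient of $E\times A$ by the diagonal copy of $A$), then yields a lift unramified at all finite places. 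In the language of your setup, this is exactly the proof that the $\Gamma$-equivariant homomorphism $\widetilde{f}|_N:N\to A$ extends to a character of all of $G_\Q$ --- the statement your argument is missing. If you want to repair your proof, you should replace the appeal to a restricted-ramification local-global principle with this character-gluing step (noting that it is special to the base field $\Q$).
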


\begin{proof}
For the forward direction, we have that $f(I_p)\cong I_p/(\ker f\cap I_p)=I_p/(\ker \widetilde{f}\cap I_p)\cong \widetilde{f}(I_p)$. This implies that there is a section $\tau_p:I_p\rightarrow \pi^{-1}(I_p)$, forcing $\Res^G_{f(I_p)}([E])=0$.

Additionally, we have $L_p/K_p$ is unramified, with $\widetilde{f}(D_p)=\Gal(L_p/\Q)$ and $f(D_p)=\Gal(K_p/\Q)$ abelian. Any unramified extension of an abelian local field is also abelian by a standard result, so $\widetilde{f}(D_p)$ is abelian. Then $\Res^G_{f(D_p)}([E])$ has an abelian subgroup which surjects onto $f(D_p)$. Since this is a central extension of $f(D_p)$ this implies that $\Res^G_{f(D_p)}([E])$ must also be abelian.

For the converse, note that $\Res^G_{f(I_p)}([E])=0$ implies the local embedding problem is solvable at all places, so there exists a possibly ramified lift $\widetilde{f}$ with $\pi\widetilde{f}=f$. $\Res^G_{f(I_p)}([E])=0$ implies that $\widetilde{f}(I_p)$ is a trivial extension of $f(I_p)$. $\Res^G_{f(D_p)}([E])$ abelian implies that inertia factors through procyclic $\Z_p^{\times}$ with finite image for both $f$ and $\widetilde{f}$. Therefore we must have $\widetilde{f}(I_p)\cong C_{n_p}\times f(I_p)$ with $(n_p,|f(I_p)|)=1$. Let $\rho_p:I_p\rightarrow A$ be $\widetilde{f}|_{I_p}$ composed with projection onto the first coordinate $C_{n_p}$ and embedded into $A$. Define the map
\begin{align*}
\rho=(\rho_p):G_\Q=\prod I_p^{ab} \rightarrow A
\end{align*}
Then we have a map
\begin{align*}
g=f\times \rho: G_\Q\rightarrow G\times A
\end{align*}
which satisfies $g(I_p)\cong\widetilde{f}(I_p)$ since $(|f(I_p)|,|\rho(I_p)|)=1$ by $|\rho(I_p)|=n_p$. Then we have a map $\widetilde{f}\times \rho:G_\Q\rightarrow E\times A$ which composed with $\pi$ gives $g$, and composed with projection onto the first coordinate gives $\widetilde{f}$. We have a subgroup $A\times A \normal E\times A$ and
\begin{align*}
(A\times A)\cap (\widetilde{f} \times \rho)(I_p)= (\rho_p\times \rho_p)(I_p)
\end{align*}
is a diagonal embedding of $I_p$ into $A\times A$. This implies that $(A\times A)\cap (\widetilde{f}\times \rho)(I_p)\le \Delta$ the diagonal subgroup. Let $\pi_\Delta:E\times A\rightarrow E$ be the quotient map by $\Delta$ (in other words, we are adding the extensions $[E]$ and $[G\times A]$ together in $H^2(G,A)$, which gives back $[E]$). It then follows that $\pi_{\Delta}\circ(\widetilde{f}\times \rho):G_\Q \rightarrow E$ is a morphism with
\begin{align*}
\pi\pi_{\Delta}(\widetilde{f}(x),\rho(x)) &=\pi(\widetilde{f}(x)\rho(x)^{-1})\\
&=f(x)
\end{align*}
and
\begin{align*}
A\cap \pi_{\Delta}(\widetilde{f}\times \rho)(I_p)=0
\end{align*}
which implies that if $\ker f\cap I_p \le \ker \pi_{\Delta}(\widetilde{f}\times \rho)$ so that it is unramified at all finite places.
\end{proof}

We can also ask that $\widetilde{f}$ be unramified at the infinite place. This is trivially true if $f$ is ramified at $\infty$ or if $2\nmid |A|$. Moreover, if $2\mid f(I_p)$ for some finite prime $p\equiv 3\mod 4$, then we can take $g:G_\Q\rightarrow \Gal(\Q(\sqrt{-p})/\Q)$ and combine it with $\widetilde{f}$ ramified at infinity in the same way as in the proof of this theorem to get a new lift unramified at infinity. As for when $2\mid |f(I_p)|$ only for primes $p\equiv 1\mod 4$, the question becomes more difficult. Lemmermeyer addressed this when he studied unramified $H_8$-extensions of quadratic fields \cite{lemmermeyer1}, but when studying the expected number of such extensions Alberts and Klys avoid using this result by showing that it only contributes to the error term \cite{alberts-klys1}. Although we do not address ramification at $\infty$ in this paper, we expect the methods used by Lemmermeyer to extend to the results in our paper without too much difficulty.

Define $\Frob_p$ to be the lift of the Frobenius automorphism in $\Gal(\overline{F}_p/\F_p)$ to $G_\Q$, defined up to conjugacy and modulo $I_p$. The we have the following corollary:

\begin{corollary}
Given $f:G_\Q\rightarrow G$ with $f(D_p)$ abelian, and $[E]\in H^2(G,A)$ such that $\Res^G_{f(I_p)}([E])=0$, then there exists a lift $\widetilde{f}:G_\Q\rightarrow E$ unramified at finite places if and only if $[f(\Frob_p),f(I_p)]_E=0$ where $[,]_E:G\times G\rightarrow E$ is the commutator of $E$.
\end{corollary}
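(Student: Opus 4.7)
The plan is to derive this corollary as a direct refinement of the preceding theorem. Since the hypothesis already supplies $\Res^G_{f(I_p)}([E])=0$ at every prime, that theorem reduces the existence question to whether $\Res^G_{f(D_p)}([E])$ is abelian at every prime, and so it suffices to recharacterize this condition in terms of the commutator pairing $[f(\Frob_p),f(I_p)]_E$.

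The first step is to set up the pairing $[\,,\,]_E$ on $f(D_p)$ carefully. Because $A$ is central in $E$ and $f(D_p)$ is abelian, the commutator $[\widetilde{x},\widetilde{y}]$ of any two lifts $\widetilde{x},\widetilde{y}\in E$ of elements $x,y\in f(D_p)$ is independent of the chosen lifts and lies in $A$. Standard commutator identities in a central extension show that the resulting map $f(D_p)\times f(D_p)\to A$ is $\Z$-bilinear and alternating. Furthermore, since $A$ is central, the commutator subgroup of $\pi^{-1}(f(D_p))\subset E$ is generated by the image of this pairing, so $\pi^{-1}(f(D_p))$ is abelian if and only if the pairing vanishes identically on $f(D_p)\times f(D_p)$.

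Next, by bilinearity, vanishing on $f(D_p)\times f(D_p)$ can be checked on generators, and $f(D_p)$ is generated by $f(\Frob_p)$ together with $f(I_p)$. Three cases arise: the pairing on $f(\Frob_p)\times f(\Frob_p)$ vanishes automatically by the alternating property; the pairing on $f(\Frob_p)\times f(I_p)$ is exactly the condition $[f(\Frob_p),f(I_p)]_E=0$ in the statement; and the pairing on $f(I_p)\times f(I_p)$ must be seen to vanish from $\Res^G_{f(I_p)}([E])=0$. Combining these observations with the theorem yields the corollary.

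The step I expect to require the most care is the inertia-inertia vanishing. Here I would argue that $\Res^G_{f(I_p)}([E])=0$ exhibits $\pi^{-1}(f(I_p))$ as a split central extension; because $A$ is central the semidirect product action is forced to be trivial, so $\pi^{-1}(f(I_p))\cong A\times f(I_p)$ as an abstract group. Both factors are abelian (the inertia image inherits abelianness from $f(D_p)$), hence $\pi^{-1}(f(I_p))$ is abelian and the pairing vanishes on $f(I_p)\times f(I_p)$. Everything else is routine bilinearity bookkeeping together with the reformulation of ``$\pi^{-1}(f(D_p))$ is abelian'' in terms of the commutator map.
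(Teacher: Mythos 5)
Your proposal is correct and takes essentially the same approach as the paper: both reduce to the preceding theorem by reformulating ``$\Res^G_{f(D_p)}([E])$ is abelian'' via the bilinear, alternating commutator pairing on the abelian group $f(D_p)$, and both use the splitting of the central extension over $f(I_p)$ to kill the inertia--inertia commutators. Your generator-by-generator decomposition also absorbs, via bilinearity, the well-definedness of $[f(\Frob_p),f(I_p)]_E$ modulo inertia, which is the point the paper's proof spends most of its effort verifying explicitly.
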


\begin{proof}
$E$ is a central extension of $G$, so the commutator factors through $G\times G$. Notice that $[a^g,b^g]_E=[a,b]_E^g$ respects conjugation, so WLOG we can fix a representative of $D_p$ and $I_p$ up to conjugation. Suppose $x,y\in D_p$ are both representatives of Frobenius $\Frob_p$. Because $f(D_p)\le G$ is abelian, we have $[f(x),f(I_p)]_E, [f(y),f(I_p)]_E\le \ker \pi=A$. Suppose $[f(x),f(I_p)]_E=0$. Then $[f(I_p),f(I_p)]_E=0$ since $\Res^G_{f(I_p)}([E])=0$ implies $\pi^{-1}(I_p)$ is a trivial extension of the abelian $f(I_p)$. Then for any $i\in \widetilde{f}(I_p)$
\begin{align*}
[\widetilde{f}(x),i] i\widetilde{f}(x) &=\widetilde{f}(x)i\\
&=\widetilde{f}(y)\widetilde{f}(y^{-1}x)i\\
&=\widetilde{f}(y)i\widetilde{f}(y^{-1},x)\\
&=[\widetilde{f}(y),i]i\widetilde{f}(y)\widetilde{f}(y^{-1}x)\\
&=[\widetilde{f}(y),i] i\widetilde{f}(x)
\end{align*}
noting that $\widetilde{f}(y^{-1}x)\in\widetilde{f}(I_p)$. Thus $[f(\Frob_p),f(I_p)]_E$ is a well-defined subgroup of $A$.

$\Res^G_{f(D_p)}([E])$ is abelian if and only if a representative of Frobenius commutes with inertia (i.e. $\widetilde{f}(D_p)$ abelian), and so the result follows from the previous theorem.
\end{proof}

\section{Lemmermeyer Factorization}\label{factor}

In this section we will translate the conditions in Corollary 1.1 to conditions on discriminants in the specific case that $G$ is abelian. Fix an extension $[E]\in H^2(G,A)$ and the quotient map $\pi:E\rightarrow G$. Then $[,]_E:G\times G\rightarrow A$ is a well-defined bilinear map for all elements, not just the image of Frobenius and inertia. For the remainder of this section, let unramified refer only to finite places.

The previous section tells us that a continuous homomorphism $f:G_\Q\rightarrow G$ has an unramified lift $\widetilde{f}:G_\Q\rightarrow E$ if and only if $\Res^G_{f(I_p)}([E])=0$ and $[f(\Frob_p),f(I_p)]_E=0$ for all primes $p$ (including the infinite prime).

For convenience, for $p$ odd we will use $\tau_p$ for the generator of $I_p^{ab}$, and WLOG consider $f:G_\Q^{ab}\rightarrow G$. (For the prime $2$, we have two generators $\tau_{2,0}$ and $\tau_{2,1}$ of order $2^\infty$, whose product $\tau_{2,0}\tau_{2,1}$ has order $2$. Here, $I_{\infty}\mapsto\langle\tau_{2,0}\tau_{2,1}\rangle$ is the ramification at infinity under the quotient map $G_\Q^{ab}\rightarrow\langle \tau_{2,0},\tau_{2,1}\rangle$). We also use $\disc(f)$ to denote the discriminant of $K=\overline{\Q}^{\ker f}$.

\begin{lemma}
\label{lemma:factorization} 
Let $Y_E=\{g\in G: \Res^G_{\langle g\rangle}([E])=0\}$. Then there is a one-to-one correspondence between maps $f:G_\Q^{ab}\rightarrow G$ satisfying $\Res^G_{f(I_p)}([E])=0$ with $\disc(f)=d=\prod p^{e_p}$ and factorizations $\disc(f)=\prod_{y\in Y_E} (d_y)^{[f(G_\Q):\langle y\rangle]}$ into coprime discriminants (except for at most two allowed to be even) where
\begin{enumerate}
\item{$|y|\mid (p-1)p^\infty$ for every prime dividing $d_y$ where $|y|$ is the order of $y$,}

\item{If $d_y,d_{y'}$ are even with $y\ne y'$ then $yy'$ has order dividing $2$. If $d_y$ is a unique even factor and $4\mid\mid d_y$, then $y$ has order dividing $2$,}

\item{$\ds\prod_{2\mid d_y<0}y\prod_{d_y<0}y^{|y|/2}\in Y_E$,}
\end{enumerate}
given by $d_y=\prod_{f(\tau_p)=y}(p^*)^{e_p/[f(G_\Q):\langle y\rangle]}$ where $p^*=(-1)^{\frac{p-1}{2}}$ for odd primes, and $2^*=-2$.
\end{lemma}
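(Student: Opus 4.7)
The plan is to exhibit the bijection constructively, moving prime-by-prime between maps $f$ and factorizations via the class field theory identification $G_\Q^{ab} \cong \prod_p \Z_p^\times$. Under this identification, a continuous homomorphism to the finite abelian group $G$ is determined entirely by its restrictions to each local inertia, and the standing hypothesis $\Res^G_{f(I_p)}([E])=0$ forces each generator image $f(\tau_p)$ (or the pair $f(\tau_{2,0}), f(\tau_{2,1})$ at $p=2$) to lie in $Y_E$. The data of $f$ thus reduces to a labeling of the ramified primes by elements of $Y_E$, and the claim is that this labeling is exactly equivalent to the factorization data on the right-hand side.

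For the forward direction, I would start with $f$, partition the ramified odd primes by $y = f(\tau_p)$, and define $d_y$ via the stated formula. The conductor--discriminant formula applied to $K = \overline{\Q}^{\ker f}$, combined with the sign convention $p^* = (-1)^{(p-1)/2}p$ recording which quadratic subfield of $\Q(\zeta_{p^\infty})$ the inertia character factors through, makes the identity $\disc(f) = \prod d_y^{[f(G_\Q):\langle y\rangle]}$ immediate; condition~1 is then forced by the structure $\Z_p^\times \cong \Z/(p-1) \times \Z_p$ of odd-$p$ inertia. The prime $2$ requires separate bookkeeping because $I_2^{ab}$ has two topological generators satisfying $(\tau_{2,0}\tau_{2,1})^2 = 1$: their images can populate two distinct $d_y$ factors simultaneously, and condition~2 records both the forced relation on $yy'$ and the finer distinction between the possible $2$-adic discriminant exponents. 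Condition~3 records the sign of $\disc(f)$ in terms of which $d_y$ are negative, using that the infinite-place image $f(\tau_\infty)$ is exactly the product $\prod_{2 \mid d_y<0} y \cdot \prod_{d_y < 0} y^{|y|/2}$.

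For the backward direction, given such a factorization, set $f(\tau_p) = y$ for each odd $p$ dividing $d_y$, and at $p=2$ use condition~2 to choose $(f(\tau_{2,0}), f(\tau_{2,1}))$ consistently with how $d_y$ (and possibly $d_{y'}$) absorb the factor of $2$. Conditions~1 and~2 guarantee that the prescribed inertia elements have orders compatible with the local $\Z_p^\times$, so the local data extends via Kronecker--Weber to a continuous homomorphism $f:G_\Q^{ab}\to G$ with the correct discriminant; condition~3 then ensures the infinite-place image $f(\tau_\infty)$ lies in $Y_E$, which is the infinite-place instance of the inertia-vanishing hypothesis. Mutual inverseness of the two constructions is immediate from the explicit formulas.

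The main obstacle is the case analysis at $p = 2$ and $\infty$, where the clean tame-ramification dictionary at odd primes breaks down. At $p=2$ one must reconcile the two-generator structure of $I_2^{ab}$ with the four possible $2$-adic discriminant exponents ($2 \nmid d$, $2\|d$, $4\|d$, $8 \mid d$), which is precisely what condition~2 is calibrated to do; at $\infty$ one must track how the individual signs of $(p^*)^{|y|-1}$ aggregate into the global sign of $\disc(f)$, which is encoded in condition~3. Everything else is a routine application of class field theory.
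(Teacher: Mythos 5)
Your overall route coincides with the paper's: identify $G_\Q^{ab}$ with $\prod_p I_p$ via Kronecker--Weber, recover $f$ from the labels $f(\tau_p)\in Y_E$, match exponents with the conductor--discriminant formula, and observe that the two explicit constructions are mutually inverse. Up to that point the plan is sound and is essentially what the paper does (the paper packages the exponent count through the auxiliary fields $K_y=\overline{\Q}^{\ker f_y}$ and the computation of $\disc(\prod K_y)$ in two ways, but this is the same conductor--discriminant argument).

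The gap is the sign. The conductor--discriminant computation only yields $\disc(f)=\prod_{y}d_y^{[f(G_\Q):\langle y\rangle]}$ up to a sign, since each $d_y$ is built from $p^*=(-1)^{(p-1)/2}p$ and carries its own sign, while $\disc(f)$ has sign $(-1)^{r_2}$. You assert that this aggregation of signs is ``encoded in condition~3,'' but condition~3 is a different statement: it is precisely the condition $f(\tau_\infty)\in Y_E$, i.e.\ $\Res^G_{f(I_\infty)}([E])=0$, a constraint on \emph{which} factorizations are admitted, not a mechanism forcing the two signs to agree. The sign agreement is an unconditional fact that must be proved for the forward map to produce a factorization of $\disc(f)$ rather than of $\pm\disc(f)$, and it is not immediate: the paper's proof has to note that if some index $[f(G_\Q):\langle y\rangle]$ is odd then the $2$-Sylow subgroup of $f(G_\Q)$ is cyclic, prove by induction that the set $\{y\in Y_E:2\nmid[f(G_\Q):\langle y\rangle]\}$ is linearly independent in the $\F_2$-vector space $f(G_\Q)/2f(G_\Q)$, and then compare $|y|/2$ with $|y||f(G_\Q)|/4$ modulo $2$ in the two cases $2\mid\mid|f(G_\Q)|$ and $4\mid|f(G_\Q)|$. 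Your proposal contains no substitute for this: ``track how the individual signs aggregate'' names the problem rather than solving it. A complete write-up needs either this linear-independence argument or an equivalent direct comparison of $(-1)^{r_2}$ with $\prod_{d_y<0}(-1)^{[f(G_\Q):\langle y\rangle]}$.
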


This lemma is a consequence of Kronecker-Weber. The proof is somewhat long and cumbersome, so we will first demonstrate the content of this theorem with an example from genus theory both to motivate the result and convince any reader not interested in trudging through the proof.

\begin{example}
Since
\[
G_\Q^{ab}/2G_\Q^{ab}=\langle \tau_{2,0},\tau_{2,1},\tau_p \text{ $p$ an odd prime}: \tau_{2,0}^2=\tau_{2,1}^2=\tau_p^2=1\rangle
\]
a $C_2^n$-extension of odd discriminant $d$ corresponds to a homomorphism
\[
f:\langle \tau_p, p\mid d:\tau_p^2=1\rangle\rightarrow C_2^n.
\]
We can produce a factorization from this map as follows: let $d_y=\prod_{p:f(\tau_p)=y}p^*$ where $p^*=(-1)^{\frac{p-1}{2}}$ and $d_0=1$. Then $d=\prod_{y\in C_2^n} d_y^{|\im f|/2}$ because the exponent of every prime dividing $d$ must be $|\im f|$ divided by the ramification degree, which is $2$. The field $\Q(\sqrt{a},\sqrt{b})$ for $a,b\in \Z$ odd and squarefree corresponds to the factorization $d= (ab)^2  = a^2 \cdot b^2 = d_{v_1}\cdot d_{v_2}$ for $v_1, v_2$ a basis of $C_2^2$. This gives a one-to-one correspondance between maps $f:G_\Q\rightarrow C_2^n$ of discriminant and factorizations $d=\prod_{y\in C_2^n} d_y^{|\im f|/2}$, since any such factoriztion defines a map by sending $\tau_p\mapsto y$ if $p\mid d_y$. In this case, conditions 1 and 2 on the factorizations are trivially satisfied because $|y|=2$ and $d$ is odd.

For this lemma, we are restricting the possible images of $f(\tau_p)$ to a specific subset of elements $Y_E\subset G$ and giving a correspondence between these maps and certain factorizations of the discriminant. This subset is specifically chosen so that we always have $\Res^{G}_{f(I_p)}([E])=0$, one of the conditions that we showed in the previous section was necessary to solve the Brauer embedding problem. Condition 3 is then equivalent to forcing the generator $\tau_\infty\in I_\infty$ to satisfy $f(\tau_\infty)\in Y_E$.
\end{example}

\begin{proof}[Proof of \ref{lemma:factorization}]
Define $d_y:=\prod_{f(\tau_p)=y} (p^*)^{a_p}$ a discriminant where $p^{a_p[\im f:\langle y\rangle]}\mid\mid \disc(f)$ exactly divides $\disc(f)$ for $y\ne 0$, and $d_0=1$. These are all clearly coprime. There exists a continuous homomorphism $f_y:G_\Q^{ab}=\prod I_p\rightarrow \langle y\rangle$ which equals $f$ on $I_p$ for primes with $f(\tau_p)=y$ and $0$ on all other primes. Let $K_y=\overline{\Q}^{\ker f_y}$. Then $\disc(K_y)=d_y$ by the higher ramification formula for discriminants and $[K_y:\Q]=|y|$. It follows that $\prod K_y/K$ is an unramified extension, where the power of $p$ dividing $\disc(\prod K_y)$ is exactly $a_p[\prod K_y: K_y]=a_p(\prod |y'| )/|y|=a_p\prod_{y'\ne y}|y'|$. Thus we have
\begin{align*}
\prod_{y\in Y_E} (d_y)^{\prod_{y'\ne y}|y'|} &= \disc(\prod K_y)\\
&=\disc(K)^{[\prod K_y:K]}\\
&=\disc(f)^{\frac{1}{|f(G_\Q)|}\prod |y|}
\end{align*}
Raising both sides to the power $\frac{|f(G_\Q)|}{\prod |y|}$ yields the result up to a sign. The conditions on $d_y$ and $y$ follow immediately from $f(\tau_{2,0}\tau_{2,1})$ having order dividing $2$, and $|f(\tau_p)|\mid (p-1)p^\infty$.

Given such a factorization, define $f(\tau_p)=y\in Y_E$ where $p\mid d_y$. For the finite primes, this gives a well-defined map $f:G_\Q^{ab}=\prod I_p\rightarrow G$ by $|y|\mid (p-1)p^\infty$.  The other conditions on the factorization imply that this map is well-defined on $I_2$. $f(\tau_p)\in Y_E$ implies $\Res^G_{f(I_p)}([E])=0$ at all finite places.

We have now proven the theorem up to a sign, except for part 3 and the infinite place. We will first show that part 3 is equivalent to $\Res^{G}_{f(I_\infty)}([E])=0$, then show that signs of the factorization also match.

Given such a factorization, the signs are not an obstruction to constructing a map as $G_\Q^{ab}=\prod I_p$ a product over finite primes and we can get a map just coming from the finite primes present in the factorization. $\Res^G_{f(I_\infty)}([E])=0$ if and only if
\begin{align*}
f(\tau_{2,0}\tau_{2,1})\prod_{p\equiv 3\mod 4}f(\tau_p)^{|f(\tau_p)|/2} \in Y_E
\end{align*}
This is the same thing as
\begin{align*}
\prod_{2\mid d_y<0}y\prod_{d_y<0}y^{|y|/2}\in Y_E
\end{align*}
since $d_y<0$ for $d_y$ odd if and only if an odd number of primes $p\equiv 3\mod 4$ divide it with odd exponent and $y$ has even order. For $d$ even, we must additionally multiply by $f(\tau_{2,0}\tau_{2,1})$, a product of the at most two $y\in Y_E$ such that $d_y$ is even. Thus $d_y<0$ for $d_y$ even if and only if $f(\tau_{2,0}\tau_{2,1})=yy'\ne 1$ and $2\mid \#\{p\mid d_y: p\equiv 3\mod 4\}$ or $f(\tau_{2,0},\tau_{2,1})=1$ and $2\nmid \#\{p\mid d_y: p\equiv 3\mod 4\}$. This is the same thing as $yy' (y)^{|y|/2}$ being the contribution to the infinite inertia. So $\prod_{2\mid d_y<0}y\prod_{d_y<0}y^{|y|/2}\in Y_E$ is equivalent to $\Res^G_{f(I_\infty)}([E])=0$.

As for the signs of discriminants, consider the quotient map $h:\Gal(\prod K_y/\Q)=\prod\langle y\rangle\mapsto \Gal(\prod K_y/\Q)/2\Gal(\prod K_y/\Q)$, then
\begin{align*}
h\left((y^{|y|/2})_{d_y<0}\right)&=(h(y)^{|y|/2})_{d_y<0,2\nmid|y|/2}\\
&=(h(y)^{|y|/2})_{d_y<0,2\nmid [f(G_\Q):\langle y\rangle]} 
\end{align*}
is the generator of $I_\infty$, noting that $h(y)$ is nontrivial if and only if $2\nmid [f(G_\Q):\langle y\rangle]$ (if $2\mid d_y<0$ we must use $y^{|y|/2 +1}$). It suffices to show that $\{y\in Y_E: 2\nmid [f(G_\Q):\langle y\rangle]\}\le f(G_\Q)/2f(G_\Q)$ is linearly independent in the $\F_2$-vector space, because then there is a map sending all such $y\mapsto -1$ which factors through $h$, so that the sign of $\disc(f)$ is $-1$ to the power of $\sum_{\{y:d_y<0,2\nmid [f(G_\Q):\langle y\rangle]\}}\frac{|y||f(G_\Q)|}{4}$.

First, if $\{y\in Y_E: 2\nmid [f(G_\Q):\langle y\rangle]\}\ne \emptyset$, then the $2$-Sylow subgroup of $f(G_\Q)$ is cyclic. In particular, an element $x$ is a square if and only if $[f(G_\Q):\langle x\rangle]$ is even.

Suppose $\prod_{i=1}^n y_i^{a_i}$ is a square ( i.e. an element of $2f(G_\Q)$) with $[f(G_\Q):\langle y_i\rangle]$ odd for all $y_i\in \{y\in Y_E: 2\nmid [f(G_\Q):\langle y\rangle]\}$. Then
\begin{align*}
\left\langle \prod_{i=1}^{n-1}y_i^{a_i},y_n^{a_n}\right\rangle &\cong \left\langle \prod_{i=1}^{n-1}y_i^{a_i}\right\rangle\times \left\langle y_n^{a_n}\right\rangle \Bigg{/} \left\langle \prod_{i=1}^{n}y_i^{a_i}\right\rangle
\end{align*}
as subgroups of $f(G_\Q)/2f(G_\Q)$. We induct on $n$. Clearly if $n=1$, then $y_1^{a_1}\in 2f(G_\Q)$ if and only if $2\mid a_1$. Suppose this is true for $n-1$. Then if $\prod_{i=1}^n y_i^{a_i}$ is a square, it follows that if $2^b\mid\mid |f(G_\Q)|$ then $2^b \mid \mid |y_n|$. Suppose $a_n$ is odd, then $2^b\mid\mid \left|\left\langle \prod_{i=1}^{n-1}y_i^{a_i},y_n^{a_n}\right\rangle\right|$. Being a square implies $\left\langle \prod_{i=1}^{n}y_i^{a_i}\right\rangle=\langle x^2\rangle$, and so $2^b\nmid |x^2|$. By using the above isomorphism, we must have that $2^b\nmid \left|\left\langle \prod_{i=1}^{n-1}y_i^{a_i}\right\rangle\right|$, which implies it is a square. By induction, $a_i\equiv 0\mod 2$ for all $i=1,...,n-1$. Then we are back to the base case $y_n^{a_n}$.

Now, the sign of $d_y$ is $1$ if it is unramified at $\infty$, and $(-1)^{|y|/2}$ if ramified. This means that the sign of $\prod_{y\in Y_E} (d_y)^{[f(G_\Q):\langle y\rangle]}$ is $-1$ to the power of
\[
\#\{y: d_y<0\text{ and }[f(G_\Q):\langle y\rangle]\text{ odd}\}=\sum_{2\nmid [\im f:\langle y\rangle]}|y|/2
\]
If $2\mid \mid |f(G_\Q)|$ then $|y|/2=1=|y||f(G_\Q)|/4\mod 2$ and we get equality. If $4\mid |f(G_\Q)|$, then $|y|/2=0=|y||f(G_\Q)|/4\mod 2$ and we also get equality. This concludes the proof.
\end{proof}

The field $\prod_{y\in Y_E} K_y$ with Galois group $\prod_{y\in Y_E} \langle y\rangle$ over $\Q$ is going to be very important. Call this field $L$ with Galois group $H$ over $\Q$. The following lemma follows immediately from properties of the quotient map $H\rightarrow G$ sending $y\mapsto y$ for each $y\in Y_E$:

\begin{lemma}
$[,]_E:G\times G\rightarrow A$ lifts to a map $[,]_E:H\times H\rightarrow A$ by applying $[,]_E$ coordinatewise and adding up the results, i.e. satisfies the following commutative diagram
\[
\begin{tikzcd}
H\times H\dar \drar{[,]_E} \\
G\times G \rar{[,]_E} &A
\end{tikzcd}
\]
In particular, if $g:G_Q\rightarrow \prod_{y\in Y_E} \langle y\rangle$ is defined by $\tau_p\mapsto f(\tau_p)\in Y_E$, then $[f(\Frob_p),f(\tau_p)]_E=[g(\Frob_p),g(\tau_p)]_E$.
\end{lemma}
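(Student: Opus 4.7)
The plan is to define the lifted commutator on $H$ as the pullback of the one on $G$ through the natural multiplication map $\phi: H \to G$, verify the commutative diagram tautologically, and then deduce the ``in particular'' statement by checking that $\phi \circ g = f$.

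Since $G$ is abelian and each factor $\langle y\rangle$ sits inside $G$, the map $\phi: H = \prod_{y \in Y_E}\langle y\rangle \to G$ defined by $(h_y)_y \mapsto \prod_y h_y$ is a well-defined homomorphism. I would define $[,]_E$ on $H\times H$ as the composition $[,]_E \circ (\phi\times\phi)$; this makes the displayed diagram commute by construction. Since $A$ is central in $E$, the commutator on $E$ descends to a $\Z$-bilinear pairing on the abelian group $G$, so by bilinearity
\[
[h,h']_E = \Bigl[\prod_y h_y, \prod_{y'} h'_{y'}\Bigr]_E = \sum_{y, y'\in Y_E} [h_y, h'_{y'}]_E,
\]
which is exactly ``applying $[,]_E$ coordinatewise and adding up the results''.

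For the ``in particular'' claim, I would show $\phi\circ g = f$ as continuous homomorphisms $G_\Q\rightarrow G$. Both factor through $G_\Q^{ab}$, and by Kronecker--Weber any finite quotient of $G_\Q^{ab}$ is generated by the images of the inertia elements $\{\tau_p\}$ (this is essentially the decomposition $G_\Q^{ab} \cong \prod_p I_p^{ab}$ used throughout the preceding section). By the very definition of $g$, the tuple $g(\tau_p)$ has $f(\tau_p)$ in the $f(\tau_p)$-coordinate and $0$ elsewhere, so $\phi(g(\tau_p)) = f(\tau_p)$; thus $\phi\circ g$ and $f$ agree on every inertia generator, hence as maps. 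Applying the commutative diagram to $(g(\Frob_p), g(\tau_p))$ then gives $[g(\Frob_p), g(\tau_p)]_E = [\phi g(\Frob_p), \phi g(\tau_p)]_E = [f(\Frob_p), f(\tau_p)]_E$. The only subtlety is that $\Frob_p$ is canonically defined only modulo inertia $I_p$, but since $f(\tau_p) \in Y_E$ forces $\Res^G_{f(I_p)}([E]) = 0$, the corollary of the preceding section guarantees that the commutator is independent of the choice of Frobenius lift. The main step is the identification $\phi\circ g = f$; everything else is tautological once the pullback pairing on $H$ is set up correctly.
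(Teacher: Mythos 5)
Your proposal is correct and matches the paper's intent: the paper gives no written proof, asserting only that the lemma ``follows immediately from properties of the quotient map $H\rightarrow G$ sending $y\mapsto y$,'' and your argument is exactly the elaboration of that map $\phi:(h_y)_y\mapsto\prod_y h_y$, the pullback pairing, and the verification $\phi\circ g=f$ on the inertia generators $\tau_p$. Your observation that the pullback pairing expands by bilinearity to the double sum $\sum_{y,y'}[h_y,h'_{y'}]_E$ is the right reading of ``coordinatewise,'' and your handling of the ambiguity of $\Frob_p$ modulo inertia via the corollary of the preceding section is a welcome extra detail.
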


Thus it suffices to check the commutator condition $[f(\Frob_p),f(I_p)]_E=0$ in $\prod_{y\in Y_E}K_y$.

\begin{lemma}
Let $f:G_\Q\rightarrow G$ be the continuous homomorphism of discriminant $\disc(f)=\prod p^{e_p}$ corresponding to a factorization $\disc(f)=\prod_{y\in Y_E} (d_y)^{[\im f:\langle y\rangle]}$ and let $g:G_Q^{ab}\rightarrow H$ be the map sending $\tau_p\mapsto f(\tau_p)\in Y_E$. Then
\begin{align*}
[g(\Frob_p),g(\tau_p)]_E &= \sum_{y\in Y_E} \sum_{q\mid d_y}\Leg{p}{q^{b_q}}_{\exp(A)}[y,g(\tau_p)]_E
\end{align*}
where $\Leg{p}{q^{b_q}}_{n}\in \Z/n\Z$ is the image of $p$ under the quotient map 
\[
\Z_q^\times\rightarrow \Z_q^\times/(\Z_q^\times)^{(n,q^{b_q-1}(q-1))}\hookrightarrow\Z/n\Z.
\]
and $b_q=e_q/[f(G_\Q):\langle y\rangle]$
\end{lemma}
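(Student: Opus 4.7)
The plan is to decompose both sides into local contributions at each prime $q\mid\disc(f)$ and match them via class field theory applied to each cyclic character $g_y$.

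First, I would invoke the previous lemma to work inside $H=\prod_{y\in Y_E}\langle y\rangle$, where $[,]_E$ is bilinear. Writing $g=(g_y)_{y\in Y_E}$ with $g_y:G_\Q^{ab}\to\langle y\rangle$ the $y$-th projection, and using that $g(\tau_p)$ is concentrated in the $y^\ast=f(\tau_p)$ coordinate, bilinearity gives
\[
[g(\Frob_p),g(\tau_p)]_E \;=\; \sum_{y\in Y_E}[g_y(\Frob_p),y^\ast]_E.
\]
Although $g_{y^\ast}(\Frob_p)$ is only well-defined modulo $\langle y^\ast\rangle$, the condition $y^\ast\in Y_E$ forces $[\langle y^\ast\rangle,y^\ast]_E=0$, so the ambiguity is harmless. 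The $y=y^\ast$ term on the right-hand side of the claim is likewise zero, since $[y^\ast,g(\tau_p)]_E=[y^\ast,y^\ast]_E=0$, so in both expressions the prime $p$ contributes nothing.

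For each $y\ne y^\ast$, the character $g_y:G_\Q^{ab}\to\langle y\rangle$ is cyclic and unramified outside $\{q:q\mid d_y\}$, hence in particular at $p$. By Kronecker--Weber it decomposes as a product of local components $g_{y,q}:\Z_q^\times\to\langle y\rangle$ characterized by $g_{y,q}(\tau_q)=y$. Global reciprocity applied to the principal idele $p$ then yields
\[
g_y(\Frob_p) \;=\; \prod_{q\mid d_y}g_{y,q}(p) \;=\; y^{\sum_{q\mid d_y}\chi_{y,q}(p)},
\]
where $\chi_{y,q}(p)\in\Z/|y|\Z$ is defined by $g_{y,q}(p)=y^{\chi_{y,q}(p)}$. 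A second application of bilinearity produces
\[
[g_y(\Frob_p),y^\ast]_E \;=\; \sum_{q\mid d_y}\chi_{y,q}(p)\,[y,g(\tau_p)]_E.
\]

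The final step is to identify $\chi_{y,q}(p)\,[y,g(\tau_p)]_E$ with $\Leg{p}{q^{b_q}}_{\exp(A)}\,[y,g(\tau_p)]_E$ inside $A$. Since $[y,g(\tau_p)]_E\in A$ has order dividing $\gcd(|y|,\exp(A))$, the integer coefficients need only agree modulo this order, and this reduces to comparing $\chi_{y,q}$ and $\Leg{\cdot}{q^{b_q}}_{\exp(A)}$ as characters on the common quotient $\Z_q^\times/(\Z_q^\times)^{(\exp(A),\,q^{b_q-1}(q-1))}$; both are normalized to agree on the chosen generator $\tau_q$, so the identification follows from uniqueness of characters on cyclic groups. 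Summing over $y\in Y_E$ then yields the claimed formula. The main obstacle is verifying that the modulus $(\exp(A),\,q^{b_q-1}(q-1))$ captures exactly the relevant image of $\chi_{y,q}$: in the tame case $q\nmid|y|$ this is a direct consequence of the conductor--discriminant formula, which forces the gcd to reduce cleanly, but in the wild case $q\mid|y|$ one must track the higher ramification filtration. The factor $q^{b_q-1}$ in the modulus is tailored precisely to this wild regime, where $b_q$ exceeds the conductor exponent of $g_{y,q}$ and the extra $q$-power is absorbed by the gcd; verifying this match is the technical heart of the argument.
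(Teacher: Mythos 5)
Your proposal is correct and follows essentially the same route as the paper: decompose $\Frob_p$ into its coordinates in $G_\Q^{ab}\cong\prod_q I_q$, evaluate the resulting local character of $\Z_q^\times$ on $p$ for each $q\mid d_y$, and finish by bilinearity of $[,]_E$. You are in fact more explicit than the paper, whose proof simply asserts that the class of $p$ acts on $\langle y\rangle$ as $\Leg{p}{q^{b_q}}_{|y|}$ and does not separately address the harmless $q=p$ term or the tame/wild identification of the modulus that you flag at the end.
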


\begin{proof}
We have that the Frobenius element $\Frob_p\in G_\Q^{ab}=\prod I_q$ is $(p)_q$ the equivalence class of $p$ in each coordinate. Therefore, under $g$ we get that
\begin{align*}
g(\Frob_p)&=g\left(\sum_{q\mid \disc(f)} (p).\tau_q \right)\\
&=\sum_{y\in Y_E} g\left(\sum_{q\mid d_y} (p).\tau_q \right)\\
&=\sum_{y\in Y_E}\sum_{q\mid d_y} (p).y
\end{align*}
The equivalence class of $p$, denoted above by $(p)$ acts on $\langle y\rangle$ by multiplication as an element of $\Z/|y|\Z$, namely $\Leg{p}{q^{e_q/[f(G_\Q):\langle y\rangle]}}_{|y|}$ for each $q\mid d_y$. The proof then follows from the bilinearity of $[,]_E$.
\end{proof}

\begin{definition}
We define $\Leg{b}{a}_n:\Z^\times\rightarrow \Z/n\Z$ by $\Leg{b}{a}_n=\sum_{q^e_q\mid\mid a}\Leg{\cdot}{q^{{e_q}}}_n$ for any $(a,b)=1$ where $a$ has prime factorization $\prod q^{e_q}$.
\end{definition}

We can now prove the main theorem for generalized Lemmermeyer factorizations as an immediate consequence of the work in this section, which we restate here for convenience:

\main*

\begin{proof}
Part $1$ is equivalent to $\Res^G_{f(I_p)}([E])=0$ by Lemma \ref{lemma:factorization}.

For part 2, we have that $q\mid d_{y'}$ if and only if $y'=g(\tau_q)$. So we then have
\begin{align*}
\sum_{y\in Y_E} \Leg{p}{d_{y}}_{\exp(A)} [y,f(\tau_p)]_E &=\sum_{y \in Y_E}\sum_{q\mid d_{y}} \Leg{p}{q^{e_q}}_{exp(A)} [y,g(\tau_p)]_E\\
&=[g(\Frob_p),g(I_p)]_E\\
&=[f(\Frob_p),f(I_p)]_E
\end{align*}
Which is $0$ if and only if $[f(\Frob_p),f(I_p)]_E=0$.
\end{proof}

\section{Certain Unramified Metabelian Extensions}\label{meta}

Lemmermeyer used factorizations of the kind found in this paper to classify unramified quaternion $H_8$-extensions of quadratic number fields \cite{lemmermeyer1}. In this section, we will detail how to use the results of the previous section to classify certain unramified metabelian extensions of abelian number fields in similar situations.

When discussing unramified $G$-extensions of number fields $L/K$, it is useful to consider $\Gal(L/\Q)$ as was done in \cite{alberts1}\cite{bhargava1}\cite{wood1}.

\begin{definition}
Call a pair $(G,G')$ with $G\le G'$ of finite index \textbf{admissible} if
\begin{enumerate}
\item{$G\normal G'$}
\item{$G'$ is generated by the set $Y_{G'}\cap (G' - G)$}
\end{enumerate}
\end{definition}

Given a $G'/G$-extension $K/\Q$, we call $L/K$ a $(G,G')$-extension if $L/\Q$ is normal, $\Gal(L/K)=G$, and $\Gal(L/\Q)=G'$. By properties of Galois theory, we know that if there exists an unramified $(G,G')$-extension $L/K$, then $(G,G')$ must be admissible.

Lemmermeyer noted in \cite{lemmermeyer1} that there existed a unique admissible pair $(H_8,G')$ with $[G':H_8]=2$, and then classified unramified $(H_8,G')$-extensions. This classification took advantage of the following property:

\begin{definition}
Call a pair $(G,G')$ \textbf{central} if
\[
[G',G']\le Z(G')
\]
where $Z(G')$ is the center of the group $G'$.
\end{definition}

\begin{proposition}\label{prop:redundancy}
Let $(G,G')$ be a central admissible pair. Then $\Aut([G',G'])$ acts transitively on $\{[E]\in H^2((G')^{ab},[G',G']):E\cong G'\}$ with stabilizers isomorphic to $\Aut([G'])$
\end{proposition}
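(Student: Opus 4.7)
Set $A = [G', G']$ and $Q = (G')^{ab}$. The centrality hypothesis gives $A \le Z(G')$, so $G'$ is a central extension of $Q$ by $A$, determining a class $[G'] \in H^2(Q, A)$. The plan is to recognize $\Aut(A)$ acting on $H^2(Q, A)$ by change of coefficients, observe that this action preserves the set $\mathcal{S} := \{[E] \in H^2(Q, A) : E \cong G'\}$ (since relabeling the normal subgroup of an extension does not change the isomorphism type of the middle group), and then exploit the admissibility hypothesis to show the action is transitive with the claimed stabilizer.

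For transitivity, I would start with $[E_1], [E_2] \in \mathcal{S}$ and fix a group isomorphism $\phi: E_1 \to E_2$. Because the commutator subgroup is characteristic and equals $A$ in both (using $E_i \cong G'$), $\phi$ restricts to $\alpha \in \Aut(A)$ and descends to $\bar\phi \in \Aut(Q)$, yielding the compatibility $\alpha_*[E_1] = \bar\phi^*[E_2]$. The core task is to show $\bar\phi^*[E_2] \in \Aut(A) \cdot [E_2]$, which I would establish by lifting $\bar\phi$ to some $\psi \in \Aut(G')$. Here the admissibility hypothesis enters decisively: writing $G' = \langle Y_{G'} \cap (G' - G) \rangle$ and recalling that for each such generator $y$ the restriction $\Res^{G'}_{\langle y \rangle}([G'])$ vanishes (so the extension splits over $\langle y\rangle$), I can freely choose a lift of $\bar\phi(y)$ in $G'$ of the correct order. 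By centrality, the only relations among these generators beyond the ones imposed by $Q$ come from the bilinear commutator pairing $[,]_{G'}: Q \times Q \to A$, and preservation of this pairing up to $\Aut(A)$ is already packaged in the identity $\alpha_*[E_1] = \bar\phi^*[E_2]$. Composing $\phi$ with such a $\psi$ produces an isomorphism inducing the identity on $Q$, whose restriction to $A$ gives the desired $\alpha' \in \Aut(A)$ with $\alpha'_*[E_1] = [E_2]$.

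For the stabilizer, I would invoke the Wells-type exact sequence for central extensions,
\[
1 \longrightarrow \Hom(Q, A) \longrightarrow \Aut(G') \longrightarrow \Stab_{\Aut(A) \times \Aut(Q)}([G']) \longrightarrow 1,
\]
whose kernel parameterizes automorphisms of $G'$ trivial on both $A$ and $Q$ (realized as $\psi \mapsto (g \mapsto \psi(g) g^{-1})$) and whose surjectivity onto the joint stabilizer of $[G']$ follows from the same lifting argument used above. Projecting to $\Aut(A)$ then identifies the $\Aut(A)$-stabilizer of $[G']$ with the claimed copy of $\Aut(G')$ (modulo the derivation kernel).

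The main obstacle will be the lifting step. Admissibility supplies a convenient generating set for $G'$, but consistently extending $\bar\phi$ across $Y_{G'} \cap (G' - G)$ to a well-defined automorphism of $G'$ requires careful bookkeeping with the bilinear commutator pairing together with the splitting condition defining $Y_{G'}$. Once this lifting is in hand, both transitivity and the stabilizer identification follow as diagram chases with the Wells sequence.
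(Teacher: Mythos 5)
Your reduction of transitivity to the liftability of $\bar\phi\in\Aut((G')^{ab})$ to an automorphism of $G'$ correctly locates the difficulty, but the lifting step --- which you yourself flag as the main obstacle --- does not go through, and the argument you sketch for it is circular. To extend the assignment $y\mapsto(\text{lift of }\bar\phi(y))$ to a homomorphism of $G'$ you need $[\bar\phi(y_1),\bar\phi(y_2)]_{G'}=\alpha([y_1,y_2]_{G'})$ for a single $\alpha\in\Aut([G',G'])$, i.e.\ you need $\bar\phi$ to be a similitude of the commutator pairing of $G'$. The identity $\alpha_*[E_1]=\bar\phi^*[E_2]$ only compares the pairings of $E_1$ and $E_2$ through $\bar\phi$; converting it into the statement you need presupposes $[E_i]\in\Aut([G',G'])\cdot[G']$, which is exactly what transitivity is supposed to deliver. (A secondary problem: $\bar\phi$ need not preserve $Y_{G'}$, so a lift of $\bar\phi(y)$ of the correct order need not exist.) Moreover the step genuinely fails. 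For the pair $(H(\ell^3),E)$ of Section \ref{HeisenbergGroups} one has $E\cong H(\ell^3)\times C_\ell$, $Q=C_\ell^3$, $A=C_\ell$, and the classes in $H^2(C_\ell^3,C_\ell)$ whose middle group is isomorphic to $E$ are exactly those with vanishing power map and nonzero alternating commutator form; there are $\ell^3-1$ of these, which cannot form a single orbit of $\Aut(C_\ell)\cong C_{\ell-1}$, and correspondingly the image of $\Aut(E)$ in $\Aut(Q)=GL_3(\F_\ell)$ is a proper subgroup (it must preserve the radical of the form). So no argument along your lines can establish transitivity of the $\Aut([G',G'])$-action on the full set $\{[E]:E\cong G'\}$.

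For comparison, the paper's proof takes a different and much shorter route: it asserts that the class $[E]$ is determined by the embedding $[G',G']\hookrightarrow E$ alone ``because the quotient is just the abelianization map,'' and deduces transitivity from transitivity of $\Aut([G',G'])$ on $\textnormal{Isom}([G',G'],[E,E])$. This silently fixes the identification of $E^{ab}$ with $(G')^{ab}$, which is precisely the $\Aut(Q)$ degree of freedom you are wrestling with; the group that visibly acts transitively is $\Aut(A)\times\Aut(Q)$, not $\Aut(A)$. In other words, you have correctly identified the real subtlety, but your proposal does not close the gap, and closing it requires either reinterpreting the set being acted on (e.g.\ as the $\Aut(A)$-orbit of $[G']$ for a fixed identification of the quotient, which is how the proposition is actually deployed in Corollaries \ref{corollary:existence} and \ref{corollary:count}) or adding a hypothesis such as surjectivity of $\Aut(G')\rightarrow\Aut((G')^{ab})$. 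Your Wells-sequence treatment of the stabilizer is sound, and for central pairs it in fact gives that the $\Aut(A)$-stabilizer of $[G']$ is trivial: an automorphism of $G'$ inducing the identity on $(G')^{ab}$ has the form $g\mapsto g\delta(g)$ with $\delta(g)$ central, hence fixes every commutator; this is consistent with the computation $\Aut([E])=1$ in Section \ref{HeisenbergGroups}.
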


\begin{proof}
Given $[E]$ with an embedding $i:[G',G']\hookrightarrow E$, $\alpha\in\Aut([G',G'])$ acts on it by $i\mapsto i\circ\alpha$. Because $[G',G']=[E,E]$ is the commutator subgroup, every such isomorphism $[E]\cong [E]$ must arise from this action, showing that the stabilizer is a quotient of $\Aut([E])$. Moreover, every extension is completely determined by the embedding of $[G',G']$ as the commutator subgroup, because the quotient is just the abelianization map which is characteristic (i.e. invariant under any automorphism which is trivial on the abelianization). Therefore the the stabilizer is isomorphic to $\Aut([G'])$ and because $\Aut([G',G'])$ acts transitively on $\text{Isom}([G',G'],[E,E])$, it must act transitively on $\{[E]:E\cong G'\}$.
\end{proof}

This will be sufficient to prove the following two corollaries:

\begin{corollary}
\label{corollary:existence}
Let $(G,G')$ be a central admissible pair and $K/\Q$ an abelian extension with $\overline{f}:G_\Q\twoheadrightarrow\Gal(K/\Q)=G'/G$. There exists an unramified $(G,G')$-extension $L/K$ if and only if their exists a factorization $\disc(K)^{[G:[G',G']]}=\prod_{y\in Y_{G'}} (d_y)^{[(G')^{ab}:\langle y\rangle]}$ into coprime discriminants (except that at most two are allowed to be even) satisfying the following:
\begin{enumerate}
\item{
\begin{enumerate}
\item{$Y_{G'}=\{y\in (G')^{ab}: \Res^{(G')^{ab}}_{\langle y\rangle}([G'])=0\}$}

\item{$|y|\mid (p-1)p^\infty$ for every prime dividing $d_y$.}

\item{If $d_y,d_{y'}$ are even with $y\ne y'$ then $yy'$ has order dividing $2$. If $d_y$ is a unique even factor and $4\mid\mid d_y$, then $y$ has order dividing $2$.}

\item{$\prod_{d_y<0}y^{|y|/2}\in Y_{G'}$.}

\item{$p\mid d_y$ implies $yG=\overline{f}(\tau_p)\in G'/G$}

\item{$(G')^{ab}=\langle y : d_y\ne 1\rangle$}

\item{$d_y=1$ for all $y\in Y_{G'}\cap G$.}
\end{enumerate}
}

\item{
For every prime $p\mid d$
\begin{align*}
\sum_{y\in Y_{G'}}\Leg{p}{d_y}_{\exp([G',G'])}[y,f(\tau_p)]_{G'}= 0
\end{align*}
where $\Leg{p}{d_y}_{\exp([G',G'])}\in\Z/\exp([G',G'])\Z$.
}
\end{enumerate}
\end{corollary}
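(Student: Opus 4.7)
The plan is to apply Theorem \ref{thm:main} to the central extension
\[
1 \to [G',G'] \to G' \to (G')^{ab} \to 1,
\]
which is genuinely central by the hypothesis that $(G,G')$ is central. Since $\Gal(K/\Q) = G'/G$ is abelian, $[G',G'] \le G$, so any prospective $(G,G')$-extension $L$ of $K$ sits in a tower $K \subseteq M \subseteq L$ with $M = L^{[G',G']}$ the maximal abelian subextension of $L/\Q$. Consequently $L$ corresponds to a continuous homomorphism $\widetilde{f}: G_\Q \to G'$ and $M$ to its abelianization $f = \pi \circ \widetilde{f}: G_\Q \to (G')^{ab}$, and the problem reduces to finding an unramified lift $\widetilde{f}$ of a surjective $f$ whose further projection to $G'/G$ equals the given $\overline{f}$.

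For the forward direction, given an unramified $L/K$, the tower $K \subseteq M \subseteq L$ forces $M/K$ unramified, so $\disc(f) = \disc(M) = \disc(K)^{[M:K]} = \disc(K)^{[G:[G',G']]}$. Theorem \ref{thm:main} then produces factorization (1) and condition (2). Parts 1(a)--(d) are immediate from Lemma \ref{lemma:factorization}; part 1(e) holds because $p \mid d_y$ means $f(\tau_p) = y$, which projects to $\overline{f}(\tau_p) = yG$; part 1(f) follows from surjectivity of $f$ onto $(G')^{ab}$ and the fact that by Kronecker-Weber this image is generated by the $f(\tau_p)$'s; and part 1(g) holds because any $y \in G$ with $d_y \ne 1$ would supply a prime ramifying in $M$ (since $f(\tau_p) = y \ne 1$) but unramified in $K$ (since $yG$ is trivial), contradicting $\disc(M) = \disc(K)^{[M:K]}$.

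For the reverse direction, Lemma \ref{lemma:factorization} converts the given factorization into a map $f: G_\Q \to (G')^{ab}$ with $\disc(f) = \disc(K)^{[G:[G',G']]}$ and $\Res^{(G')^{ab}}_{f(I_p)}([G']) = 0$ at every prime. Conditions 1(e) and 1(g) jointly ensure that $\pi \circ f$ agrees with $\overline{f}$ on each $\tau_p$: primes dividing $d_y$ with $y \notin G$ land correctly in $G'/G$ by 1(e), while the absence by 1(g) of nontrivial $d_y$ for $y \in G$ matches the primes unramified in $K$ to those unramified in $f$. Hence $K \subseteq M$, and the discriminant equality forces $M/K$ unramified. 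Condition 1(f) ensures $f$ is surjective; condition 2 of the corollary is condition 2 of Theorem \ref{thm:main}, producing an unramified lift $\widetilde{f}: G_\Q \to G'$. The centrality of $[G',G']$ in $G'$ then upgrades surjectivity of $f$ to surjectivity of $\widetilde{f}$: writing $H = \widetilde{f}(G_\Q)$, we have $H \cdot [G',G'] = G'$ since $f$ is surjective, and by centrality a direct expansion gives $[H,H] = [G',G']$, forcing $[G',G'] \le H$ and hence $H = G'$. Thus $L := \overline{\Q}^{\ker \widetilde{f}}$ is the desired $(G,G')$-extension.

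The main obstacle is the careful bookkeeping required to match the seven sub-conditions of (1) with the outputs of Theorem \ref{thm:main} and Lemma \ref{lemma:factorization} while tracking how the constructed $f$ interacts with the prescribed $\overline{f}$. Conditions 1(e), (f), and (g) are precisely what pin down the resulting extension to be one of the specific field $K$ with Galois group \emph{exactly} $G'$ over $\Q$, rather than just \emph{some} abelian field of compatible discriminant; the centrality hypothesis is what makes the final upgrade from $f$ surjective to $\widetilde{f}$ surjective go through cleanly.
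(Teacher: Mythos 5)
Your proposal follows essentially the same route as the paper: identify the problem with the Brauer embedding problem for the central extension $1\to[G',G']\to G'\to(G')^{ab}\to 1$, invoke Lemma \ref{lemma:factorization} to match conditions 1(a)--(e), read 1(f) as surjectivity of $f$ and 1(g) as the lift being unramified over $\overline{f}$, apply Theorem \ref{thm:main} for condition 2, and finally upgrade surjectivity of $f$ to surjectivity of $\widetilde{f}$ using centrality. Your surjectivity step is in fact cleaner than the paper's: from $G'=H\cdot[G',G']$ with $[G',G']$ central you get $[G',G']=[H,H]\le H$ directly, whereas the paper argues through the quotient $G'/B$ with $B=[G',G']\cap H$; both are correct.

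The one genuine omission is the dependence of the criterion on the choice of class $[G']\in H^2((G')^{ab},[G',G'])$. The quantities $Y_{G'}$, $\Res^{(G')^{ab}}_{\langle y\rangle}([G'])$, and $[\cdot,\cdot]_{G'}$ appearing in the statement are defined via a chosen presentation of $G'$ as a central extension of $(G')^{ab}$ by $[G',G']$, and an arbitrary unramified $(G,G')$-extension $L/K$ only furnishes an isomorphism $\Gal(L/\Q)\cong G'$ up to automorphism, hence realizes a class in the $\Aut([G',G'])$-orbit of the chosen one. The paper fixes one class, proves the equivalence for it, and then uses Proposition \ref{prop:redundancy} to show that any two classes with total space isomorphic to $G'$ differ by an automorphism of $[G',G']$, under which $Y_{G'}$ is unchanged and the vanishing condition in part 2 is invariant (automorphisms of $[G',G']$ being $\Z/\exp([G',G'])\Z$-linear). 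Your argument silently fixes a single identification throughout; without the invariance check, the forward direction is incomplete, since the extension you are handed need not induce the class you fixed. The fix is short, but it is a step the paper needs and your proposal does not supply.
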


\begin{corollary}
\label{corollary:count}
Let $(G,G')$ be a central admissible pair and $K/\Q$ an abelian extension with $\overline{f}:G_\Q\rightarrow \Gal(K/\Q)=G'/G$. Suppose there exists an unramified $(G,G')$-extension $L/K$ with $[\Gal(L/\Q)]=[G']\in H^2((G')^{ab},[G',G'])$ corresponding to a factorization $\disc(K)^{[G:[G',G']]}=\prod_{y\in Y_{G'}}(d_y)^{[(G')^{ab}:\langle y\rangle ]}$ as in Corollary \ref{corollary:existence}.  Then there are exactly
\[
\frac{\prod_{y\in Y_{G'}}(\#[G',G'][|y|])^{\omega(d_y)}}{\#\Aut([G'])\#\Hom(G',[G',G'])}
\]
such extensions, where $[G',G'][n] = \{g\in [G',G']: g^n=0\}$ is the $n$-torsion subgroup.
\end{corollary}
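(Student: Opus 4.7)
The plan is to enumerate unramified lifts $\widetilde f \colon G_\Q \to G'$ of the map $f \colon G_\Q \to (G')^{ab}$ determined by the given factorization via Lemma \ref{lemma:factorization}, and then divide by the redundancy coming from automorphisms of $G'$ that preserve $f$.

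For the numerator, the hypothesis supplies at least one unramified lift $\widetilde f_0$, and since $A:=[G',G']$ is central in $G'$, every other continuous lift of $f$ has the form $\widetilde f_0\cdot\rho$ for some $\rho\in\Hom(G_\Q^{ab},A)$. Such a product is unramified if and only if $\rho|_{I_p}$ factors through $f|_{I_p}$ at every finite prime $p$. For odd $p\mid d_y$ this forces $\rho(\tau_p)\in A[|y|]$, and a parallel analysis at $p=2$ uses the generators $\tau_{2,0},\tau_{2,1}$ introduced in Section \ref{embed}. Multiplying the contributions at the ramified primes yields exactly $\prod_{y\in Y_{G'}}(\#A[|y|])^{\omega(d_y)}$ unramified lifts. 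Each such lift is automatically surjective onto $G'$: because $\rho$ takes values in the center, commutators of $\widetilde f$ agree with those of $\widetilde f_0$, so $\mathrm{im}(\widetilde f)\supseteq A$, and combined with surjectivity onto $(G')^{ab}$ this gives $\mathrm{im}(\widetilde f)=G'$.

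For the denominator, each $\widetilde f$ cuts out a field $L=\overline\Q^{\ker\widetilde f}$, and two surjective lifts produce the same $L$ exactly when they differ by an $\alpha\in\Aut(G')$. The requirement that both lift $f$ forces $\alpha$ to induce the identity on $(G')^{ab}$; let $\Aut_f(G')$ denote this subgroup of $\Aut(G')$. Since $A$ is characteristic in $G'$, restriction to $A$ fits $\Aut_f(G')$ into a short exact sequence
\[
1\longrightarrow\Aut_{f,A}(G')\longrightarrow\Aut_f(G')\longrightarrow\Stab_{\Aut(A)}([G'])\longrightarrow 1,
\]
whose kernel $\Aut_{f,A}(G')=\{\alpha:\bar\alpha=\mathrm{id}_{(G')^{ab}},\ \alpha|_A=\mathrm{id}_A\}$ is identified with $\Hom((G')^{ab},A)=\Hom(G',[G',G'])$ via the standard assignment $\phi\mapsto(g\mapsto g\cdot\phi(\bar g))$, with centrality of $A$ ensuring this is a homomorphism. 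The image in $\Aut(A)$ is precisely the stabilizer of $[G']$ under the change-of-coefficients action on $H^2((G')^{ab},A)$, which by Proposition \ref{prop:redundancy} is isomorphic to $\Aut([G'])$. Hence $\#\Aut_f(G')=\#\Aut([G'])\cdot\#\Hom(G',[G',G'])$, and dividing the lift count by $\#\Aut_f(G')$ produces the stated formula.

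The main delicacy is the identification of $\Aut_f(G')$ via this short exact sequence together with the correct application of Proposition \ref{prop:redundancy} to size the stabilizer; once these are in place the count follows directly from the torsor structure on the set of unramified lifts.
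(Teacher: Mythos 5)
Your argument is correct, but it is organized rather differently from the paper's. The paper counts homomorphisms $\Cl(\widetilde{K})\rightarrow[G',G']$ lying over the class $[G']$: it runs inflation--restriction along $1\rightarrow\Cl(\widetilde{K})\rightarrow\Gal(H(\widetilde{K})/\Q)\rightarrow\Gal(\widetilde{K}/\Q)\rightarrow 1$, notes that every homomorphism out of $\Gal(H(\widetilde{K})/\Q)$ factors through the genus field $\prod_p\overline{\Q}^{\ker f_p}$, computes $\#H^1(\Gal(H(\widetilde{K})/\Q),[G',G'])=\prod_y(\#[G',G'][|y|])^{\omega(d_y)}$, divides by $\#H^1(\Gal(\widetilde{K}/\Q),[G',G'])=\#\Hom(G',[G',G'])$ to size the fiber of the transgression over $[G']$, and only at the end divides by $\#\Aut([G'])$ via Proposition \ref{prop:redundancy}. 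You never leave the Galois-representation side: you realize the unramified lifts of $f$ as a torsor under $\prod_p\Hom(f(I_p),[G',G'])$ --- which is literally the same group as the paper's $H^1(\Gal(H(\widetilde{K})/\Q),[G',G'])$, since the genus field has Galois group $\prod_p\langle f(\tau_p)\rangle$ --- and then absorb both denominator factors at once into $\#\Aut_f(G')$ via the extension $1\rightarrow\Hom(G',[G',G'])\rightarrow\Aut_f(G')\rightarrow\Stab_{\Aut([G',G'])}([G'])\rightarrow 1$, identifying the quotient with $\Aut([G'])$ by Proposition \ref{prop:redundancy}. Thus the factor $\#\Hom(G',[G',G'])$ appears for you as automorphisms of $G'$ trivial on both $[G',G']$ and $(G')^{ab}$, whereas in the paper it is the ambiguity in extending a character of $\Cl(\widetilde{K})$ to $\Gal(H(\widetilde{K})/\Q)$; these are two faces of the same count, related by restricting a lift $\widetilde{f}$ to $\Gal(\overline{\Q}/\widetilde{K})$. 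Your route avoids the Hilbert class field and inflation--restriction entirely and makes the ``same field if and only if the lifts differ by an automorphism over $f$'' bookkeeping explicit, which the paper leaves implicit; the paper's route makes it more visible that one is counting quotients of the class group. One small caveat applies equally to both treatments: at $p=2$ the group $f(I_2)$ need not be cyclic, so the exponent $\omega(d_y)$ bookkeeping there deserves the ``parallel analysis'' you allude to but do not carry out.
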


The idea is that these corollaries generalize \ref{theorem:C4} and \ref{theorem:H8}. The first one generalizes the factorization $d=d_1 d_2 d_3$ and Dirichlet conditions $\Leg{d_1 d_2}{p_3}=\Leg{d_1 d_3}{p_2}=\Leg{d_2 d_3}{p_1}=1$ from \ref{theorem:H8} and the second one generalizes the number $2^{\omega(d)-3}$ of such extension corresponding to a factorization in \ref{theorem:H8}. The conditions in these results are significantly denser in order to account for more cases. The reader may be interested in reading Section \ref{HeisenbergGroups} before the proofs of these corollaries, where we present a worked out example of the result of this section for the Heisenberg group $H(\ell^3)$, the unique nonabelian group of cardinality $\ell^3$ and exponent $\ell$.

\begin{proof}[Proof of \ref{corollary:existence}.]
Fix a choice of $[G']\in H^2((G')^{ab},[G',G'])$. We will prove the theorem for this choice, and then at the end of the proof show that the result for one choice implies the result for every such choice.

Because $(G,G')$ is central and admissible, we know that $G'$ is a central $[G',G']$-extension of $(G')^{ab}$. Setting $A=[G',G']$, $E=G'$ gives us the set-up of the previous sections. Lemma 3.1 shows that 1(a)(b)(c)(d)(e) is equivalent to the existence of a map $f:G_\Q\rightarrow (G')^{ab}$ which is a lift of the quotient map $G_\Q\rightarrow G'/G$ defining $K/\Q$. 1(f) is equivalent to this map being surjective, and so is equivalent to the existence of a $(G')^{ab}$-extension $M/\Q$ such that $M/K$ is unramfied. In Theorem \ref{thm:main}, we get that there exists an unramified lift of $f$ to $\widetilde{f}:G_\Q\rightarrow G'$ if and only if part 2 holds. Furthermore, $d_y=1$ implies that no ramification is sent to $y$, so it follows that $\widetilde{f}$ is unramified over the quotioent map $G_\Q\rightarrow \Gal(K/\Q)$ if and only if $d_y=1$ for all $y\in Y_{E}\cap G$, i.e. condition 1(g), holds.

It then suffices to check whether or not $\widetilde{f}$ is surjective, by Galois theory. We have $f$ is surjective by construction, so suppose that $\widetilde{f}$ is not surjective. Then $\widetilde{f}(G_\Q) \le G'$ is a subgroup which surjects onto $(G')^{ab}$. Therefore $G' = \widetilde{f}(G_\Q) A$ as $A=[G',G']$. If $A\le \widetilde{f}(G_\Q)$ then we are done. Let $B=A\cap \widetilde{f}(G_\Q)$. Then we have $G'/B \cong \widetilde{f}(G_\Q)/B\times A/B$ since $\widetilde{f}(G_\Q)/B\cap A/B=1$. However, $\widetilde{f}(G_\Q)/B\cong \widetilde{f}(G_\Q)/\widetilde{f}(G_\Q)\cap A\le G'/A$ is abelian, implying that $G'/B$ is abelian. Thus $[G',G']=A\le B=\im\widetilde{f}\cap A$, concluding the proof for fixed $[G']$.

Because $[G',G']$ is the commutator subgroup, the previous proposition tells us that different choices of $[G']$ only differ by embedding $[G',G']$ up to automorphism. This implies that $[,]_{G'}$ differs up to automorphism and $\Res^{(G')^{ab}}_{\langle y\rangle}([G'])$ is fixed up to isomorphism. This implies that $Y_{G'}$ is independent of the choice of $[G']$ and $[,]_{G'}$ is fixed up to automorphism of $[G',G']$, for which the equation in part 2 is invariant because automorphisms of $[G',G']$ are $\Z/\exp([G',G'])\Z$-linear maps.
\end{proof}

\begin{proof}[Proof of \ref{corollary:count}.]
Any unramifed $(G,G')$-extension $L/K$ corresponding to the factorization $\disc(K)^{[G:[G',G']]}=\prod_{y\in Y_{G'}}(d_y)^{[(G')^{ab}:\langle y\rangle]}$ contains the field $\widetilde{K}$ given by the continuous homomorphism $f:G_\Q^{ab}\rightarrow (G')^{ab}$ with $f(\tau_p)=y$ for $p\mid d_y$ by construction. So it suffices to count unramified $[G',G']$-extensions of $\widetilde{K}$ whose corresponding extension isomorphism class is $[G']$. By assumption $[G',G']$ is abelian and central, so any such extension must come from a surjective homomorphism $\Cl(\widetilde{K})\rightarrow [G',G']$. Let $H(\widetilde{K})$ be the Hilbert class field of $\widetilde{K}$. Then the exact sequence
\[
\begin{tikzcd}
1 \rar & \Cl(\widetilde{K}) \rar & \Gal(H(\widetilde{K})/\Q) \rar & \Gal(\widetilde{K}/\Q) \rar &1
\end{tikzcd}
\]
gives rise to the following inflation-restriction sequence with trivial action on $[G',G']$:
\[
\begin{tikzcd}
0 \rar & H^1(\Gal(\widetilde{K}/\Q),[G',G']) \rar & H^1(\Gal(H(\widetilde{K})/\Q),[G',G']) \dlar\\
{} &H^1(\Cl(\widetilde{K}),[G',G'])^{\Gal(\widetilde{K}/\Q)} \rar & H^2(\Gal(\widetilde{K}/\Q),[G',G'])
\end{tikzcd}
\]
where the last map sends $f:\Cl(\widetilde{K})\rightarrow [G',G']$ to $[\Gal(H(\widetilde{K})^{\ker f}/\Q)]$. By assumption $[G']$ is in the image of this map. So it follows that there are
\begin{align*}
H^1(\Gal(H(\widetilde{K})/\Q),[G',G']) / H^1(\Gal(\widetilde{K}/\Q),[G',G'])
\end{align*}
such homomorphisms $f:\Cl(\widetilde{K})\rightarrow [G',G']$ giving rise to $[G']$. Notice that because $[G',G']$ is abelian, any homomorphism $\Gal(H(\widetilde{K})/\Q)\rightarrow [G',G']$ must factor through $\Gal(\widetilde{K}^{gen}/\Q)$, the Galois group of the genus field. If $f_p:G_\Q^{ab}\rightarrow \langle y\rangle$ is the map sending $\tau_p\mapsto y$ and $\tau_q\mapsto 0$ for any prime $q\ne p$ then we can describe the genus field as $\prod_{p\mid \disc(K)} \overline{\Q}^{\ker f_p}$ with Galois group $\prod_{p\mid \disc(K)} \langle f(\tau_p)\rangle$ over $\Q$. Thus
\begin{align*}
H^1(\Gal(H(\widetilde{K})/\Q),[G',G']) &= \Hom\left(\prod_{p\mid \disc(K)}\langle f(\tau_p)\rangle, [G',G']\right)\\
&=\prod_{p\mid \disc(K)} \Hom(\langle f(\tau_p)\rangle,[G',G'])\\
&=\prod_{p\mid \disc(K)} [G',G'][|f(\tau_p)|]\\
&=\prod_{y\in Y_{G'}}\prod_{p\mid d_y} [G',G'][|y|]
\end{align*}
where $G[n]$ denotes the $n$-torsion of an abelian group. So it follows that
\[
\#H^1(\Gal(H(\widetilde{K})/\Q,[G',G']) = \prod_{y\in Y_{G'}} (\#[G',G'][|y|])^{\omega(d_y)}
\]
Then the number of such homomorphisms $\Cl(\widetilde{K})\rightarrow [G',G']$ giving rise to $[G']$ is
\begin{align*}
\frac{\prod_{y\in Y_{G'}}\#[G',G'][|y|]^{\omega(d_y)}}{\#H^1(\Gal(\widetilde{K}/\Q),[G',G'])} &= \frac{\prod_{y\in Y_{G'}}\#[G',G'][|y|]^{\omega(d_y)}}{\#\Hom((G')^{ab},[G',G'])}
\end{align*}
Since every homomorphism $G'\rightarrow [G',G']$ must factor through $(G')^{ab}$, we can replace the bottom with $\Hom(G',[G',G'])$.

Normally, there are more homomorphisms than corresponding fields, so one might expect that we need to divide by $\Aut([G',G'])$ to eliminate redundancy. However, the redundancy coming from automorphisms of $[G',G']$ in the case can produce different isomorphism classes of $[G']$. The Proposition \ref{prop:redundancy} shows that the subgroup $\Aut([G'])\hookrightarrow \Aut([G',G'])$ parametrizes all the redundancy, concluding the proof.
\end{proof}

\section{Heisenberg Groups}\label{HeisenbergGroups}
Fix an odd prime $\ell$ and consider the Heisenberg group
\[
H(\ell^3) = \langle x,y: [x,y]=z, x^\ell=y^\ell=z^\ell=1, [x,z]=[y,z]=1\rangle
\]
This is the unique nonabelian group of order $\ell^3$ and exponent $\ell$. We define
\[
E = H(\ell^3) \rtimes C_\ell = \langle x,y, \sigma: [x,y]=[x,\sigma]=[y,\sigma]=z, x^\ell=y^\ell=z^\ell=\sigma^\ell=1, [x,z]=[y,z]=1\rangle
\]
In particular, $\langle z\rangle=[E,E] \le Z(E)$ giving us a central exact sequence
\[
\begin{tikzcd}
1\rar &C_\ell \rar & E \rar &C_\ell^3 \rar &1
\end{tikzcd}
\]
 This makes $(H(\ell^3),E)$ a central admissible pair. Moreover, because every element of $E$ has order $\ell$, it follows that $Y_E=E^{ab}=C_\ell^3$. We will prove the following classification as an example of Corollary \ref{corollary:existence} and Corollary \ref{corollary:count}:

\begin{theorem}
Let $K/\Q$ be a cyclic degree $\ell$ extension of discriminant $d=p^{\ell-1}q^{\ell-1}r^{\ell-1}$ coprime to $\ell$. Fix $g\in \Gal(K/\Q)$ as a generator of every nontrivial inertia group. Then there exists an unramified $(H(\ell^3),E)$ extension $L/K$ if and only if there exists $A,B,C\in\Z/\ell\Z$ with $A+B+C\ne 0$ and
\begin{align*}
\Leg{p}{q^{\ell-1}}_\ell (-A)+\Leg{p}{r^{\ell-1}}_\ell B&=0\\
\Leg{q}{p^{\ell-1}}_\ell A+\Leg{q}{r^{\ell-1}}_\ell (-C)&=0\\
\Leg{r}{p^{\ell-1}}_\ell (-B)+\Leg{r}{q^{\ell-1}}_\ell C&=0
\end{align*}
Moreover, in that case there are exactly $\ell-1$ such extensions for each factorization, which all have distinct extension classes $[\Gal(L/\Q)]\in H^2(C_\ell^3,C_\ell)$.
\end{theorem}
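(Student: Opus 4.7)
The plan is to apply Corollaries~\ref{corollary:existence} and~\ref{corollary:count} directly to the central admissible pair $(G, G') = (H(\ell^3), E)$. Since $E$ has exponent $\ell$, we have $Y_E = (G')^{ab} = C_\ell^3$ with basis $\bar x, \bar y, \bar\sigma$; $[G', G'] = \langle z\rangle \cong C_\ell$ of exponent $\ell$; and the quotient map $(G')^{ab} \twoheadrightarrow G'/G = C_\ell$ is the projection onto the $\bar\sigma$-coordinate, normalized so that $\bar\sigma \mapsto g$.

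Under this setup I would verify the conditions of Corollary~\ref{corollary:existence}. The factorization $\disc(K)^{\ell^2} = \prod_y d_y^{\ell^2}$ reduces to $pqr = \prod d_y$ and amounts to assigning each of $p, q, r$ to an element $y \in Y_E$. Condition 1(e) forces each assigned $y$ to have $\bar\sigma$-coordinate $1$, so $y_p = x^{\alpha_p} y^{\beta_p} \sigma$ and likewise for $q, r$. Condition 1(b) ($\ell \mid p-1$) follows from tame ramification; conditions 1(c), 1(d), and the sign condition are vacuous since $\ell$ and $p, q, r$ are all odd; condition 1(g) is automatic from 1(e). Condition 1(f) --- that $y_p, y_q, y_r$ generate $C_\ell^3$ --- reduces, since all three share $\bar\sigma$-coordinate $1$, to
\[
\det \begin{pmatrix} \alpha_p & \beta_p & 1 \\ \alpha_q & \beta_q & 1 \\ \alpha_r & \beta_r & 1 \end{pmatrix} \ne 0.
\]

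Next I would evaluate the bilinear form $[\cdot, \cdot]_E$ using $[\bar x, \bar y]_E = [\bar x, \bar\sigma]_E = [\bar y, \bar\sigma]_E = 1 \in C_\ell$ extended antisymmetrically. Setting $A = [y_p, y_q]_E$, $B = [y_r, y_p]_E$, $C = [y_q, y_r]_E$, a direct expansion in the $\alpha_\bullet, \beta_\bullet$ yields
\[
A + B + C = \det \begin{pmatrix} \alpha_p & \beta_p & 1 \\ \alpha_q & \beta_q & 1 \\ \alpha_r & \beta_r & 1 \end{pmatrix},
\]
identifying 1(f) with the condition $A + B + C \ne 0$. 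The commutator equation at the prime $t = p$ in part~2 of Corollary~\ref{corollary:existence} becomes $\Leg{p}{q^{\ell-1}}_\ell [y_q, y_p]_E + \Leg{p}{r^{\ell-1}}_\ell [y_r, y_p]_E = 0$, i.e.\ $\Leg{p}{q^{\ell-1}}_\ell(-A) + \Leg{p}{r^{\ell-1}}_\ell B = 0$, matching the theorem; analogous computations for $t = q$ and $t = r$ give the remaining two equations. Conversely, for any $(A, B, C)$ with $A + B + C \ne 0$, one can linearly solve for $(\alpha_\bullet, \beta_\bullet)$ realizing these commutators --- e.g.\ by fixing $y_p = \sigma$ and reducing the system to two linear constraints in four unknowns --- producing a valid factorization. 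This establishes the equivalence in the theorem.

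For the count, I would invoke Corollary~\ref{corollary:count}. Here $\#[G',G'][|y|] = \ell$ and $\omega(d_y) = 1$ for each $y \in \{y_p, y_q, y_r\}$, so the numerator is $\ell^3$, while $\#\Hom(E, [E,E]) = \#\Hom(C_\ell^3, C_\ell) = \ell^3$. Combined with Proposition~\ref{prop:redundancy} --- under which the $\Aut([G',G']) = \F_\ell^\times$-action permutes the classes in $H^2(C_\ell^3, C_\ell)$ isomorphic to $E$ by rescaling the commutator form --- the formula produces $\ell - 1$ extensions per factorization, with the $\ell - 1$ distinct $\F_\ell^\times$-rescalings of the commutator form giving pairwise inequivalent classes in $H^2(C_\ell^3, C_\ell)$. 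The main obstacle is the automorphism bookkeeping in this last step: carefully identifying $\#\Aut([G'])$ and checking that the formula of Corollary~\ref{corollary:count} evaluates to $\ell - 1$, together with verifying that the $\F_\ell^\times$-orbit of $[E]$ exhausts all classes in $H^2(C_\ell^3, C_\ell)$ isomorphic to $E$.
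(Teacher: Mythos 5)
Your proposal follows essentially the same route as the paper's proof: apply Corollaries \ref{corollary:existence} and \ref{corollary:count} to the pair $(H(\ell^3),E)$ with $Y_E=E^{ab}=C_\ell^3$, identify condition 1(f) with the nonvanishing of the determinant that equals $A+B+C$, match the character equations prime by prime, and combine the count formula (which forces $\#\Aut([E])=1$) with Proposition \ref{prop:redundancy} to get $\ell-1$ extensions in distinct classes. One small caveat: your suggested normalization $y_p=\sigma$ for realizing an arbitrary triple with $A+B+C\ne 0$ fails in the edge case $A=B=0\ne C$ (the constraints force $a_q+b_q=a_r+b_r=0$ and hence $a_qb_r-a_rb_q=0\ne C$), so you need the paper's symmetry argument to first relabel so that a nonzero commutator sits in the pivot position before solving the linear system for the third generator.
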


\begin{proof}
Fix $\pi:E^{ab}\rightarrow C_\ell$ the quotient map given above. Without loss of generality, we choose generators $\tau_p,\tau_q,\tau_r\in G_\Q^{ab}$ such that $\overline{f}:G_\Q^{ab}\rightarrow \Gal(K/\Q)=C_\ell$ satisfies $\overline{f}(\tau_p)=\overline{f}(\tau_q)=\overline{f}(\tau_r)=g$.

We will walk through the conditions of the corollary for this case, describing what they each correspond to. Suppose we have a factorization
\[
\disc(K)^{[H(\ell^3):[E,E]]}=d^{\ell^2}=\prod_{y\in Y_{E}} (d_y)^{[E^{ab},\langle y\rangle]}=\prod_{y\in Y_E} d_y^{\ell^2}
\]
in other words, $d=\prod_{y\in Y_E} d_y$. This corresponds to a lift
\[
\begin{tikzcd}
{} & G_\Q\dar{\overline{f}}\dlar[swap]{f}\\
E^{ab}\rar & C_\ell
\end{tikzcd}
\]
as in Lemma \ref{lemma:factorization} if and only if the factorization satisfies 1(a)(b)(c)(d)(e) of Corollary \ref{corollary:existence}.

We will now walk through each part of Corollary \ref{corollary:existence}, describing what properties they imply and verifying them for our pair $(H(\ell^3),E)$:
\begin{enumerate}
\item{\begin{enumerate}
\item{
$Y_E$ is the set of possible images of ramification in $E^{ab}$, which is equivalent to having $\Res^{E^{ab}}_{f(I_p)}([E])=0$ for fintie primes. We see that every element of $E$ has order $\ell$, which implies that $Y_E=E^{ab}=C_\ell^3$.
}

\item{
By the existence of the field $K$, we must have $\ell\mid p-1,q-1,r-1$. Every element of $Y_E$ has order $\ell$, so this part is vacuous.
}

\item{
$K$ cannot be ramified at $2$, so none of the factors are even making this part vacuous as well.
}

\item{
$Y_E=E^{ab}$ makes this part vacuous, as everything belongs to $Y_E$. This part shows that $\Res^{E^{ab}}_{f(I_\infty)}([E])=0$.
}

\item{
By assumption, $\overline{f}(\tau_p)=\overline{f}(\tau_q)=\overline{f}(\tau_r)=g$, so $d_y\ne 1$ requires $\pi(y)=g$. This is necessary for there to exist $f:G_\Q\rightarrow E^{ab}$ a well-defined lift of $\overline{f}:G_\Q\rightarrow G$ corresponding to our factorization. Call the corresponding $y$ values $f(\tau_p)=y_p$, $f(\tau_q)=y_q$, and $f(\tau_r)=y_r$ respectively.
}

\item{
This is equivalent to the lift $f:G_\Q\rightarrow E^{ab}$ being surjective, i.e. $E^{ab}=\langle y_p,y_q,y_r\rangle$. This is true if and only if $\ker \pi = \langle y_py_q^{-1}, y_py_r^{-1}\rangle$, i.e. $y_py_q^{-1}, y_py_r^{-1}$ are linearly independent by $\dim_{\F_\ell}\ker \pi=2$.
}

\item{
This is equivlent to $f$ being unramified over $\overline{f}$. Given a choice of $y_p,y_q,y_r$ generating $E^{ab}$ satisfying the above conditions, this is vacuoucly true as $\pi(y_x)=g\ne 1$, so $y_x\not\in H(\ell^3)$.
}
\end{enumerate}
}
\end{enumerate}
This implies that part 1 of Corollary \ref{corollary:existence} is satisfied if and only if the factorization is given by $d=\prod_{\pi(y)=g}d_y$ and $\{y_py_q^{-1},y_py_r^{-1}\}$ are linearly independent. Given these, part 1 gives us a lift $f:G_\Q\rightarrow E^{ab}$ with $\Res^{E^{ab}}_{f(I_p)}([E])=0$ for all primes.

\begin{enumerate}
\item[2.]{
This is equivalent to $[f(\Frob_p),f(I_p)]_E=0$, and can be distilled into three equations:
\begin{align*}
\Leg{p}{q^{\ell-1}}_\ell[y_q,y_p]_E+\Leg{p}{r^{\ell-1}}_\ell[y_r,y_p]_E&=0\\
\Leg{q}{p^{\ell-1}}_\ell[y_p,y_q]_E+\Leg{q}{r^{\ell-1}}_\ell[y_r,y_q]_E&=0\\
\Leg{r}{p^{\ell-1}}_\ell[y_p,y_r]_E+\Leg{r}{q^{\ell-1}}_\ell[y_q,y_r]_E&=0
\end{align*}
since the factors will be trivial whenever $d_y=1$.
}
\end{enumerate}

Corollary \ref{corollary:existence} tells us that there exists an unramified $(H(\ell^3),E)$-extension $L/K$ if and only if parts 1 and 2 hold.

Consider the presentation given in the previous section
\[
E=\langle x,y,\sigma: [x,y]=[x,\sigma]=[y,\sigma]=z,x^\ell=y^\ell=z^\ell=\sigma^\ell=1,[x,z]=[y,z]=1 \rangle
\]
and without loss of generality let $\sigma$ be a preimage of $g$. Then we have the bilinear map
\[
[,]_E:\langle \overline{x},\overline{y},\overline{\sigma}:\overline{x}^\ell=\overline{y}^\ell=\overline{\sigma}^\ell=1\rangle^{ab}\rightarrow \langle z\rangle = \Z/\ell\Z
\]
Where we write $\Z/\ell\Z$ additively. Is defined by it's values on the basis $\overline{x},\overline{y},\overline{\sigma}$, given by being antisymmetric with $[\overline{x},\overline{y}]_{E}=[\overline{x},\overline{\sigma}]_E=[\overline{y},\overline{\sigma}]_E=z$.

By construction, we must have $y_p=v_p\overline{\sigma}$, $y_q=v_q\overline{\sigma}$, and $y_r=v_r\overline{\sigma}$ for $v_i\in\langle \overline{x},\overline{y}\rangle$. We have that $[y_i,y_j]_E=[v_i,v_j]_E+[v_i,\overline{\sigma}]_E+[\overline{\sigma},v_j]_E$ for any choice of $i,j\in\{p,q,r\}$ by a simple computation. Consider
\begin{align*}
[\overline{x}^{a_1}\overline{y}^{b_1},\overline{x}^{a_2}\overline{y}^{b_2}]_E &= a_1b_2-a_2b_1 = \text{det}\begin{pmatrix}a_1 & b_1\\a_2 & b_2\end{pmatrix}\\
[\overline{x}^{a_1}\overline{y}^{b_1},\overline{\sigma}]_E&=a_1+b_1
\end{align*}
So it follows that
\begin{align*}
[\overline{x}^{a_1}\overline{y}^{b_1}\overline{\sigma},\overline{x}^{a_2}\overline{y}^{b_2}\overline{\sigma}]_E&=a_1b_2-a_2b_1+a_1+b_1-a_2-b_2\\
&=(a_1-1)(b_2+1)-(a_2+1)(b_1-1)
\end{align*}
is also a determinant. The requirement that $y_py_q^{-1}, y_py_r^{-1}$ are linearly independent is the same as the requirement that $v_pv_q^{-1},v_pv_r^{-1}$ are linearly independent. A pair of vectors being linearly independent is equivalent to the matrix with those vectors as column vectors having nonzero determinant. So condition 1 is equivalent to $y_p,y_q,y_r$ mapping to $g$ under $\pi$ and satisfying
\begin{align*}
0 &\ne [v_pv_q^{-1},v_pv_r^{-1}]_E\\
&=-[v_p,v_r]_E-[v_q,v_p]_E+[v_q,v_r]_E\\
&=[v_r,v_p]_E+[v_p,v_q]_E+[v_q,v_r]_E\\
&=[y_r,y_p]_E+[y_p,y_q]_E+[y_q,y_r]_E
\end{align*}
Fix $A,B,C\in\Z/\ell\Z$ with $A+B+C\ne 0$. Suppose $y_p,y_q$ are chosen with $[y_p,y_q]_E=A$. Then we have that choosing $y_r=\overline{x}^{a_r}\overline{y}^{b_r}\overline{\sigma}$ with $[y_r,y_p]_E=B$ and $[y_q,y_r]_E=C$ is equivalent to solving a system of two linear equations in two variables. Write $y_p=\overline{x}^{a_p}\overline{y}^{b_p}\overline{\sigma}$ and $y_q=\overline{x}^{a_q}\overline{y}^{b_q}\overline{\sigma}$, then we want a solution to
\begin{align*}
(a_r-1)(b_p+1)-(a_p+1)(b_r-1)&=B\\
(a_q-1)(b_r+1)-(a_r+1)(b_q-1)&=C
\end{align*}
In other words
\begin{align*}
\begin{pmatrix}b_p+1 & -(a_p+1)\\-(b_q-1) & a_q-1\end{pmatrix}
\begin{pmatrix}a_r \\ b_r\end{pmatrix} &= \begin{pmatrix}B-b_p+a_p\\C+a_q-b_q\end{pmatrix}
\end{align*}
where the matrix has determinant
\[
(b_p+1)(a_q-1)-(a_p+1)(b_q-1) = [y_q,y_p]_E = -A
\]
WLOG by symmetry we may assume that $A\ne 0$ by $A+B+C\ne 0$, therefore we may find such a choice for $y_r$. This shows that any such choice of $A,B,C$ can be realized by $y_p,y_q,y_r$, so the equations on Dirichlet characters in this theorem are equivalent to condition 2 of Corollary \ref{corollary:existence} for some choice of $y_p,y_q,y_r$ satisfying condition 1.

As for the number of such extensions corresponding to a factorization, notice that plugging $[E,E]=C_\ell$, $|y|=\ell$, and $E^{ab}=C_\ell^3$ into the formula in Corollary \ref{corollary:count} tells us that there are
\[
\frac{\ell^{\omega(d)-3}}{\#\Aut([E])} = \frac{1}{\Aut([E])}
\]
such extensions for the choice of $[E]$. This implies that $\Aut([E])=1$, which can be checked group theoretically. There are $\#\Aut([E,E])/\#\Aut([E])=(\ell-1)/1$ isomorphism classes of $[E]$ by Proposition \ref{prop:redundancy}, concluding the proof.
\end{proof}

\section{Acknowledgements}

I would like to thank my advisor Nigel Boston for many helpful conversations, advice, and recommendations. I would also like to thank Yuan Liu for helpful conversations about the Bruaer embedding problem. This work was done with the support of National Science Foundation grant DMS-1502553.

\bibliographystyle{abbrv}

\bibliography{BAreferences}

\begin{thebibliography}{10}

\bibitem{alberts1}
B.~Alberts.
\newblock {Cohen-Lenstra} moments for some nonabelian groups, Aug 2016.
\newblock Preprint available at \url{https://arxiv.org/abs/1606.07867}.

\bibitem{alberts-klys1}
B.~Alberts and J.~Klys.
\newblock The distribution of {$H_{8}$}-extensions of quadratic fields, Jun
  2017.
\newblock Preprint available at \url{https://arxiv.org/abs/1611.05595}.

\bibitem{bhargava1}
M.~Bhargava.
\newblock The geometric sieve and the density of squarefree values of invariant
  polynomials, Jan 2014.
\newblock Preprint available at
  \url{https://arxiv.org/abs/1402.0031?context=math}.

\bibitem{cohen-lenstra1}
H.~Cohen and H.~W. Lenstra.
\newblock Heuristics on class groups of number fields.
\newblock {\em Lecture Notes in Mathematics Number Theory Noordwijkerhout
  1983}, page 33–62, 1984.

\bibitem{fouvry-kluners1}
E.~Fouvry and J.~Kl{\"u}ners.
\newblock On the 4-rank of class groups of quadratic number fields.
\newblock {\em Inventiones Mathematicae}, 167(3):455–513, 2006.

\bibitem{fueter1}
R.~Fueter.
\newblock Der {Klassenk{\"o}rper} der quadratischen {K{\"o}rper} und die
  komplexe {Multiplikation}.
\newblock {\em Diss. G{\"o}ttingen}, 1903.

\bibitem{gerth1}
F.~Gerth.
\newblock The 4-class ranks of quadratic fields.
\newblock {\em Inventiones Mathematicae}, 77(3):489–515, 1984.

\bibitem{gras1}
G.~Gras.
\newblock Sur les $\ell$-classes d'id{\'e}aux dans les extensions cycliques
  relatives de degr{\'e} premier $\ell$.
\newblock {\em Ann. Inst. Fourier}, 23.3, 23.4:1--48, 1--44, 1973.

\bibitem{inaba1}
E.~Inaba.
\newblock {\"U}ber die {Struktur} der $\ell$-{Kassengruppe} zyklischer
  {Zahlk{\"o}rper} von {Primzahlgrad} $\ell$.
\newblock {\em J. Fac. Sci. Tokyo I}, 4:61--115, 1940.

\bibitem{klys1}
J.~Klys.
\newblock The distribution of p-torsion in degree p cyclic fields, Oct 2016.
\newblock Preprint available at \url{https://arxiv.org/abs/1610.00226v1}.

\bibitem{lemmermeyer1}
F.~Lemmermeyer.
\newblock Unramified quaternion extensions of quadratic number fields.
\newblock {\em Journal de Th{\'e}orie des Nombres de Bordeaux}, 9(1):51–68,
  1997.

\bibitem{lemmermeyer2}
F.~Lemmermeyer.
\newblock Class field towers, 2010.
\newblock available at
  \url{http://www.rzuser.uni-heidelberg.de/~hb3/publ/pcft.pdf}.

\bibitem{malle-matzat1}
G.~Malle and B.~H. Matzat.
\newblock {\em Inverse Galois theory}.
\newblock 1994.

\bibitem{martinet1}
J.~Martinet.
\newblock A propos de classes d'ideaux.
\newblock {\em S{\'e}minaire Th{\'e}orie des Nombres Bordeaux}, 5, 1971/1972.

\bibitem{nakagoshi1}
N.~Nakagoshi.
\newblock A construction of unramified abelian $\ell$-extensions of regular
  {Kummer} extensions.
\newblock {\em Acta Arith.}, 44:47--58, 1984.

\bibitem{nomura1}
A.~Nomura.
\newblock On the existence of unramified $p$-extensions.
\newblock {\em Osaka Journal of Mathematics}, 28:55--62, 1991.

\bibitem{nomura2}
A.~Nomura.
\newblock A note on unramified quaternion extensions over quadratic number
  fields.
\newblock {\em Proc. Japan Acad.}, 78:80--82, 2002.

\bibitem{nomura3}
A.~Nomura.
\newblock Notes on the existence of certain unramified $2$-extensions.
\newblock {\em Illinois Journal of Mathematics}, 46:1279--1286, 2002.

\bibitem{nomura4}
A.~Nomura.
\newblock Notes on the existence of unramified non-abelian $p$-extensions over
  cyclic fields.
\newblock {\em Proc. Japan Acad.}, 90:67--70, 2014.

\bibitem{odai1}
Y.~Odai.
\newblock On unramified cyclic extensiosn of degree $\ell$ of algebraic number
  fields of degree $\ell$.
\newblock {\em Nagoya Math. J.}, 107:135--146, 1987.

\bibitem{redei1}
L.~R{\'e}dei.
\newblock Die {Anzahl} der durch 4 teilbaren {Invariation} der {Klassengruppe}
  eines beliebigen quadratischen {Zahlk{\"o}rpers}.
\newblock {\em Math. Naturwiss. Anz. Ungar. Akad. d. Wiss.}, 10:338--363, 1932.

\bibitem{redei-reichardt1}
L.~R{\'e}dei and H.~Reichardt.
\newblock Die {Anzahl} der durch 4 teilbaren {Invarianten} der {Klassengruppe}
  eines beliebigen quadratischen {Zahlk{\"o}rpers}.
\newblock {\em J. Reine Angew. Math.}, 170:69--74, 1933.

\bibitem{reichardt1}
H.~Reichardt.
\newblock Zur {Struktur} der absoluten {Idealklassengruppe} im quadratischen
  {Zahlk{\"o}rper}.
\newblock {\em J. Riene Angew. Math.}, 170:75--82, 1933.

\bibitem{serre1}
J.-P. Serre.
\newblock {\em Topics in Galois Theory}.
\newblock A K Peters, 2008.

\bibitem{wood1}
M.~M. Wood.
\newblock Nonabelian {Cohen-Lenstra} moments, Feb 2017.
\newblock Preprint available at \url{https://arxiv.org/abs/1702.04644}.

\end{thebibliography}

\end{document}